\title{A Generalization of the Brocard Circles and the Brocard Triangles}
\newtheorem{theorem}{Theorem}[section]
\newtheorem{corollary}{Corollary}[section]
\newtheorem{lemma}{Lemma}[section]
\newtheorem{proposition}{Proposition}[section]
\newtheorem{defn}{Definition}[section]
\newtheorem{rem}{Remark}[section]
\begin{document}

\begin{abstract}
In this article we provide a generalization of the Brocard circle and the Brocard triangles. The generalization arises from considering the Miquel points of two inscribed triangles having a common circumcircle. We also state some known properties which still hold in the generalized setting.
\end{abstract}
\title{}

\markboth{Sudharshan K V}{A Generalization of the Brocard Circle}

\centerline {\Large{\bf A GENERALIZATION OF THE BROCARD CIRCLE}}

\bigskip

\begin{center}
{\large SUDHARSHAN K V}

\centerline{}
\end{center}

\bigskip

\textbf{Abstract.} In this article we provide a generalization of the Brocard circle and the Brocard triangles. The generalization arises from considering the Miquel points of two inscribed triangles having a common circumcircle. We also present various properties of the Brocard triangles, which lead to a generalization of the Steiner and Tarry points.

\bigskip

\section{Introduction}
	\label{sec:intro}
The Brocard points, named after French Geometer Henri Brocard, have been a subject of significant interest and study in the field of Euclidean geometry. They have been studied extensively (see \cite{honsberger1995episodes}, \cite{guggenbuhl1953henri}). In this section, we recall the definitions of the Brocard points, the Brocard circle, and the Brocard triangles. We shall also state some known properties which are retained in the generalization developed in this paper.\\

Throughout the paper, let $ABC$ be an anticlockwise oriented triangle. 
\begin{defn}[Brocard Points]
	The first and second Brocard Points $\Omega, \Omega'$ of $\triangle ABC$ are interior points satisfying the angle conditions:$$\angle \Omega AB = \angle \Omega BC = \angle \Omega CA ~ \text{and}~ \angle \Omega'AC = \angle \Omega'BA = \angle \Omega'CB.$$
\end{defn}
We introduce circles $\omega_A, \omega_B, \omega_C$, and $\omega_A', \omega_B', \omega_C'$ as follows:

\begin{itemize}
    \item Circle $\omega_A$ passes through $A$ and is tangent to side $BC$ at $B$.
    \item Circle $\omega_B$ passes through $B$ and is tangent to side $CA$ at $C$.
    \item Circle $\omega_C$ passes through $C$ and is tangent to side $AB$ at $A$.
\end{itemize}

Similarly, in a cyclic fashion:

\begin{itemize}
    \item Circle $\omega_A'$ passes through $A$ and is tangent to side $BC$ at $C$.
    \item Circle $\omega_B'$ passes through $B$ and is tangent to side $CA$ at $A$.
    \item Circle $\omega_C'$ passes through $C$ and is tangent to side $AB$ at $B$.
\end{itemize}

\bigskip

\bigskip

\bigskip

--------------------------------------

\textbf{Keywords and phrases: }Brocard Points, Brocard Triangle, Brocard Circle

\textbf{(2020)Mathematics Subject Classification: }51P99, 60A99

\newpage

It can be observed that the first Brocard point $\Omega$ is the common point of intersection of circles $\omega_A, \omega_B, \omega_C$. On the other hand, the second Brocard point $\Omega'$ is the common point of intersection of circles $\omega_A', \omega_B', \omega_C'$. Furthermore, it is well-established that $\Omega$ and $\Omega'$ are isogonal conjugates in the reference triangle $\triangle ABC$.

\begin{defn}[Miquel Points]
	Let $D,E,F$ be points on the sidelines $BC,CA,AB$ respectively. The Miquel point of $\triangle DEF$ in $\triangle ABC$ is the common point of the circumcircles of $\triangle AEF$, $\triangle BFD$, and $\triangle CDE$. \\
 Let $A,B,C,D$ be four general points, and let $P = AB \cap CD$, $Q = AD \cap BC$. The Miquel point of quadrangle $ABCD$ is the common point of the circumcircles of $\triangle PAD$, $\triangle PBC$, $\triangle QAB$ and $\triangle QCD$.
\end{defn}

The two Brocard points of $\triangle ABC$ are therefore the Miquel points of $\triangle BCA$ and $\triangle CAB$ in $\triangle ABC$. This view of the Brocard points leads to a generalization (Section \ref{subsec:prelim}), where we study Miquel points of two inscribed triangles that share a circumcircle.

\begin{defn}[Brocard Triangles]
	(See \cite{weisstein2001brocard}, \cite{weisstein2003first}, and \cite{weisstein2003second}) The vertices of the first Brocard triangle are defined as $A_1 = \Omega B \cap \Omega' C$, $B_1 = \Omega C \cap \Omega' A$, and $C_1=\Omega A \cap \Omega' B$.\\
    On the other hand, the second Brocard triangle has its vertices denoted as $A_2=\omega_A \cap \omega_A', B_2=\omega_B \cap \omega_B'$, and $C_2=\omega_C\cap \omega_C'$, where these intersection points are distinct from $A$, $B$, and $C$.
\end{defn}

\begin{figure}[!h]
	\includegraphics[scale=0.35]{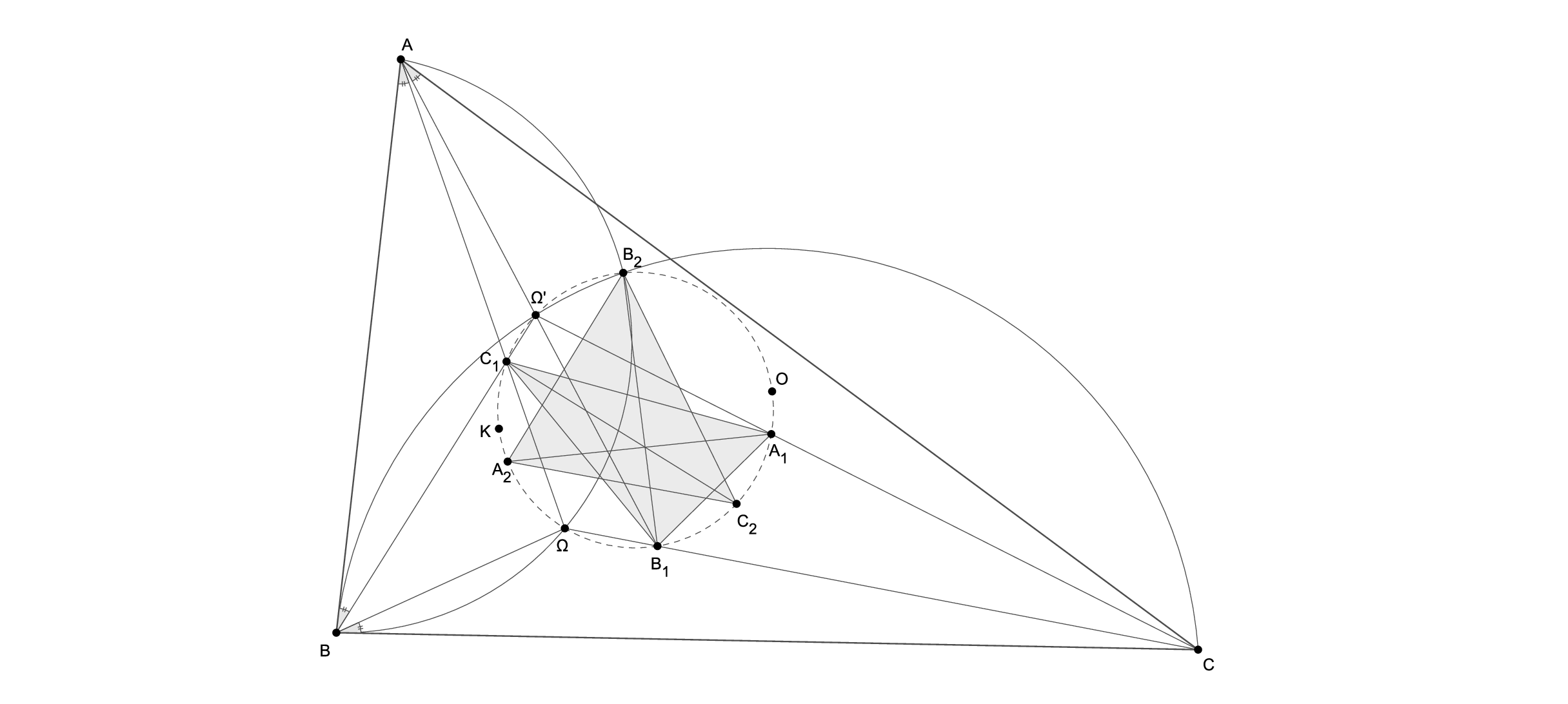}
	\caption{Brocard Points and associated Objects}
\end{figure}

It is known that $\triangle A_1B_1C_1$ and $\triangle A_2B_2C_2$ have the same circumcircle. This circle is called the Brocard Circle, and we know that the circumcenter $O$ and symmedian point $K$ of $\triangle ABC$ are diametrically opposite points in this circle. \\
The construction we present to generalize the Brocard points extends to a generalization of the Brocard circle and the Brocard triangles, as detailed in Section \ref{subsec:bro_cir}.

\section{A Generalization of the Brocard Points}
Firstly, we present several well-known and useful Lemmas.
\subsection{Preliminary Lemmas}
\begin{lemma}
	\label{lemma:spiral}
	Let $M$ be the Miquel point of $\triangle DEF$ in $\triangle ABC$. Then, $\triangle DEF$ is the image of the pedal triangle of $M$ in $\triangle ABC$ under a spiral similarity centered at $M$. 
\end{lemma}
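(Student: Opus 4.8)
The plan is to exhibit the spiral similarity explicitly by computing, at the center $M$, both the rotation angle and the scaling ratio that carry each foot of the pedal triangle to the corresponding vertex of $\triangle DEF$. Write $P,Q,R$ for the feet of the perpendiculars from $M$ to the sidelines $BC,CA,AB$, so that $\triangle PQR$ is the pedal triangle of $M$; I want to produce a single spiral similarity centered at $M$ sending $P\mapsto D$, $Q\mapsto E$, $R\mapsto F$. Recall that a map is such a spiral similarity precisely when the complex ratios $\frac{D-M}{P-M}$, $\frac{E-M}{Q-M}$, $\frac{F-M}{R-M}$ coincide, i.e.\ when the three pairs share a common length ratio and a common directed angle of rotation at $M$.

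First I would introduce the three base angles $\theta_D=\ma(MD,BC)$, $\theta_E=\ma(ME,CA)$, $\theta_F=\ma(MF,AB)$, the directed angle each segment from $M$ makes with its side. The key step is to show $\theta_D=\theta_E=\theta_F$. This follows by chaining the three defining Miquel concyclicities: since $A,E,F,M$ are concyclic we have $\ma(ME,EA)=\ma(MF,FA)$, and as $EA$ lies along $CA$ and $FA$ along $AB$ this reads $\theta_E=\theta_F$; the circles $(BFDM)$ and $(CDEM)$ give $\theta_F=\theta_D$ and $\theta_D=\theta_E$ by the identical argument. Denote the common value by $\theta$.

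Next I would read off the ratio and the angle from the three right triangles $MPD$, $MQE$, $MRF$, each right-angled at its pedal foot. In $\triangle MPD$ the angle at $D$ is exactly $\theta$, so $MP=MD\sin\theta$ and $\ma(MP,MD)=90^{\circ}-\theta$, and likewise for the other two. Hence $\frac{MD}{MP}=\frac{ME}{MQ}=\frac{MF}{MR}=\csc\theta$ and the three rotation angles agree, so the spiral similarity centered at $M$ with rotation $90^{\circ}-\theta$ and ratio $\csc\theta$ sends $\triangle PQR$ to $\triangle DEF$, as claimed.

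The step I expect to be the genuine obstacle is upgrading the directed-angle equalities (which live only mod $180^{\circ}$) to equalities of the actual complex ratios, i.e.\ making sure the three rotations carry the \emph{same sign}, so that the resulting map is a true direct similarity rather than an indirect one. This is where the standing hypothesis that $\triangle ABC$ is anticlockwise oriented does the work: it fixes coherently, for all three sides at once, on which side of each pedal foot the points $D,E,F$ fall, so the three signed rotations are identical. Once this orientation bookkeeping is pinned down, the conclusion is immediate.
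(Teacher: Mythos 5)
Your argument is, in substance, the paper's own proof: the paper likewise chains the three Miquel concyclicities to obtain $\measuredangle MDB = \measuredangle MEC = \measuredangle MFA$ (your $\theta_D=\theta_E=\theta_F$), combines this with the right angles at the pedal feet, and concludes $\triangle MDD' \stackrel{+}{\sim} \triangle MEE' \stackrel{+}{\sim} \triangle MFF'$ -- which is exactly your statement that the three complex ratios coincide, with common ratio $\csc\theta$ and rotation $90^\circ-\theta$.

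However, your resolution of the step you yourself flag as the crux does not work. You claim the anticlockwise orientation of $\triangle ABC$ ``fixes on which side of each pedal foot the points $D,E,F$ fall.'' It does not: fix $M$ and let the inscribed triangle range over the one-parameter family of triangles with Miquel point $M$ (the spiral images of the pedal triangle of $M$); as the common angle $\theta$ passes through $90^\circ$, the point $D$ crosses from one side of the foot $P$ to the other, with $\triangle ABC$ anticlockwise throughout, so the orientation of the reference triangle cannot supply the sign. The coherence comes instead from the configuration itself: in each of the right triangles $MPD$, $MQE$, $MRF$ the right angle is at the \emph{foot}, so the angle at $M$ is acute, i.e.\ the rotation carrying ray $MP$ to ray $MD$ lies in the open interval $(-90^\circ,90^\circ)$, and likewise for the other two. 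Three rotations lying in a common open interval of length $180^\circ$ and congruent mod $180^\circ$ are equal outright, which upgrades your directed-angle identity $\measuredangle(MP,MD)=90^\circ-\theta$ to an equality of genuine rotations, hence of complex ratios. Equivalently, you may invoke the standard fact that equality of two pairs of corresponding directed angles (mod $180^\circ$) already forces \emph{direct} similarity of triangles -- precisely how the paper states its conclusion, $\triangle MDD' \stackrel{+}{\sim} \triangle MEE' \stackrel{+}{\sim} \triangle MFF'$ -- since the mirror alternative would force $2\measuredangle MDD' \equiv 0 \pmod{180^\circ}$, impossible for the nondegenerate, non-right angle at $D$. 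With either repair your proof is complete and coincides with the paper's.
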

\begin{proof}
    Throughout the paper, we represent the directed measure (modulo $\pi$) of an angle $\theta$ as $\measuredangle \theta$.\\

    Let $\triangle D'E'F'$ be the pedal triangle of point $M$ with respect to $\triangle ABC$. Since $M$ is the Miquel point, we observe that $\measuredangle MDB = \measuredangle MFA = \measuredangle MEC$. Additionally, $\measuredangle MD'B = \measuredangle MF'A = \measuredangle ME'C = 90^\circ$. Rewriting the angle relations, we find that $\measuredangle MDD' = \measuredangle MEE' = \measuredangle MFF'$ and $\measuredangle MD'D = \measuredangle ME'E = \measuredangle MF'F = 90^\circ$.\\
 
    As a consequence of these angle relationships, we establish that $\triangle MDD' \stackrel{+}{\sim} \triangle MEE' \stackrel{+}{\sim} \triangle MFF'$. This similarity leads to the existence of a spiral similarity $\Psi_M$ centered at point $M$, which maps $\triangle D'E'F'$ to $\triangle DEF$.
\end{proof}
\begin{lemma}
	\label{lemma:cyclic}
	Let $M$ be the Miquel point of $\square ABCD$, where $A,B,C,D$ lie on a circle $\odot(O,r)$. Let $P=AB\cap CD$, $Q=AD\cap BC$. Then $OM\perp PQ$ and $M\in PQ$.
\end{lemma}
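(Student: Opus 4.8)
The plan is to establish the two assertions separately: first the incidence $M \in PQ$ by a radical-axis argument, and then the perpendicularity $OM \perp PQ$ by identifying $M$ with the inverse of the third diagonal point of the quadrilateral. Throughout I write $\Gamma = \odot(O,r)$, $\omega_1 = \odot(PBC)$ and $\omega_2 = \odot(PAD)$; by the definition of the Miquel point, $\omega_1$ and $\omega_2$ meet precisely at $P$ and $M$, so line $PM$ is their radical axis.

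For the collinearity I would show that $Q$ is the radical center of $\Gamma,\omega_1,\omega_2$. Indeed, $Q=AD\cap BC$ lies on the common chord $BC$ of $\Gamma$ and $\omega_1$, so $\mathrm{pow}(Q,\omega_1)=\overline{QB}\cdot\overline{QC}=\mathrm{pow}(Q,\Gamma)$; likewise $Q$ lies on the common chord $AD$ of $\Gamma$ and $\omega_2$, so $\mathrm{pow}(Q,\omega_2)=\overline{QA}\cdot\overline{QD}=\mathrm{pow}(Q,\Gamma)$. Hence $\mathrm{pow}(Q,\omega_1)=\mathrm{pow}(Q,\omega_2)$, which places $Q$ on the radical axis $PM$ of $\omega_1,\omega_2$. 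Therefore $P,M,Q$ are collinear and $M\in PQ$.

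For the perpendicularity I would introduce the third diagonal point $R=AC\cap BD$ and invoke the classical fact that the diagonal triangle $PQR$ of a cyclic quadrilateral is self-polar with respect to $\Gamma$; in particular the line $PQ$ is the polar of $R$. Since the polar of $R$ meets the line $OR$ exactly at the inverse $R^{\ast}$ of $R$ in $\Gamma$ and is perpendicular to $OR$ there, it suffices to prove $M=R^{\ast}$. As $M\in PQ=\mathrm{polar}(R)$ is already in hand, the only thing left is that $O,M,R$ are collinear: this forces $M=\mathrm{polar}(R)\cap OR=R^{\ast}$, whence $OM=OR^{\ast}\perp PQ$.

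To prove $O,M,R$ collinear I would pass to complex coordinates taking $\Gamma$ to be the unit circle with $O=0$. Recognizing $M$ as the center of the spiral similarity sending $A\mapsto B$, $D\mapsto C$ (the standard spiral-similarity description of the second intersection of $\omega_1,\omega_2$) yields $m=\frac{bd-ac}{(b+d)-(a+c)}$, while the chord-intersection formula gives $R$ at $\rho=\frac{ac(b+d)-bd(a+c)}{ac-bd}$. Then $O,M,R$ are collinear iff $m/\rho\in\R$, which I would check by conjugating with $\bar z=1/z$ for the four points: organizing everything around the symmetric quantities $a+c$, $b+d$, $ac$, $bd$, the numerator and denominator transform so that $m/\rho$ coincides with its own conjugate, completing the argument. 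The incidence $M\in PQ$ is routine once the radical-axis viewpoint is adopted; the real content, and the main obstacle, is the perpendicularity, where the work is concentrated in the collinearity $O,M,R$ and in correctly assembling the two standard ingredients (the self-polar diagonal triangle and the pole–polar description of $PQ$), so I would keep the final conjugation computation transparent to guard against a sign or symmetry slip.
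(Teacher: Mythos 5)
Your proposal is correct, and it shares its skeleton with the paper's proof --- both reduce the lemma to the identification $M=R^*$, where $R=AC\cap BD$ and $R^*$ is its inverse in $\Gamma$, and both then finish with Brokard's theorem (your ``self-polar diagonal triangle'' is exactly the paper's invocation of Brokard to make $PQ$ the polar of $R$). Where you genuinely diverge is in how $M=R^*$ is established: the paper does it synthetically, using $\overline{RR^*}\cdot\overline{RO}=RO^2-r^2=\overline{RA}\cdot\overline{RC}$ to get $R^*,O,A,C$ and $R^*,O,B,D$ concyclic and then angle-chasing $\measuredangle AR^*B=\measuredangle AQB$ (and cyclic variants) to show $R^*$ lies on all four Miquel circles; you instead compute in complex coordinates, using the (correctly applied) spiral-center lemma to write $M$ as the fixed point of the map $A\mapsto B$, $D\mapsto C$. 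Your radical-axis argument for $M\in PQ$ is an additional clean synthetic step the paper does not need, since $M=R^*\in\mathrm{polar}(R)=PQ$ comes for free. In fact your computation proves more than you ask of it: with $m=\frac{ac-bd}{(a+c)-(b+d)}$ and $\rho=\frac{ac(b+d)-bd(a+c)}{ac-bd}$, conjugation on the unit circle gives $\bar\rho=\frac{(b+d)-(a+c)}{bd-ac}$, so $m\bar\rho=1$, i.e.\ $m=1/\bar\rho$, which says outright that $M$ is the inverse of $R$ --- not merely that $O,M,R$ are collinear. Thus both your radical-axis paragraph and the reduction ``collinearity plus $M\in\mathrm{polar}(R)$ forces $M=R^*$'' are dispensable: $m=1/\bar\rho$ together with Brokard yields the whole lemma at once. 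The trade-off: the paper's route stays coordinate-free at the cost of a nontrivial angle chase, while yours buys a two-line verification at the cost of importing the spiral-similarity lemma and complex coordinates; as with the paper's proof, you should tacitly exclude the degenerate cases $a+c=b+d$ (spiral center at infinity) and $R=O$ (where $PQ$ is the line at infinity), which can be handled by continuity.
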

\begin{figure}[!h]
	\includegraphics[scale=0.35]{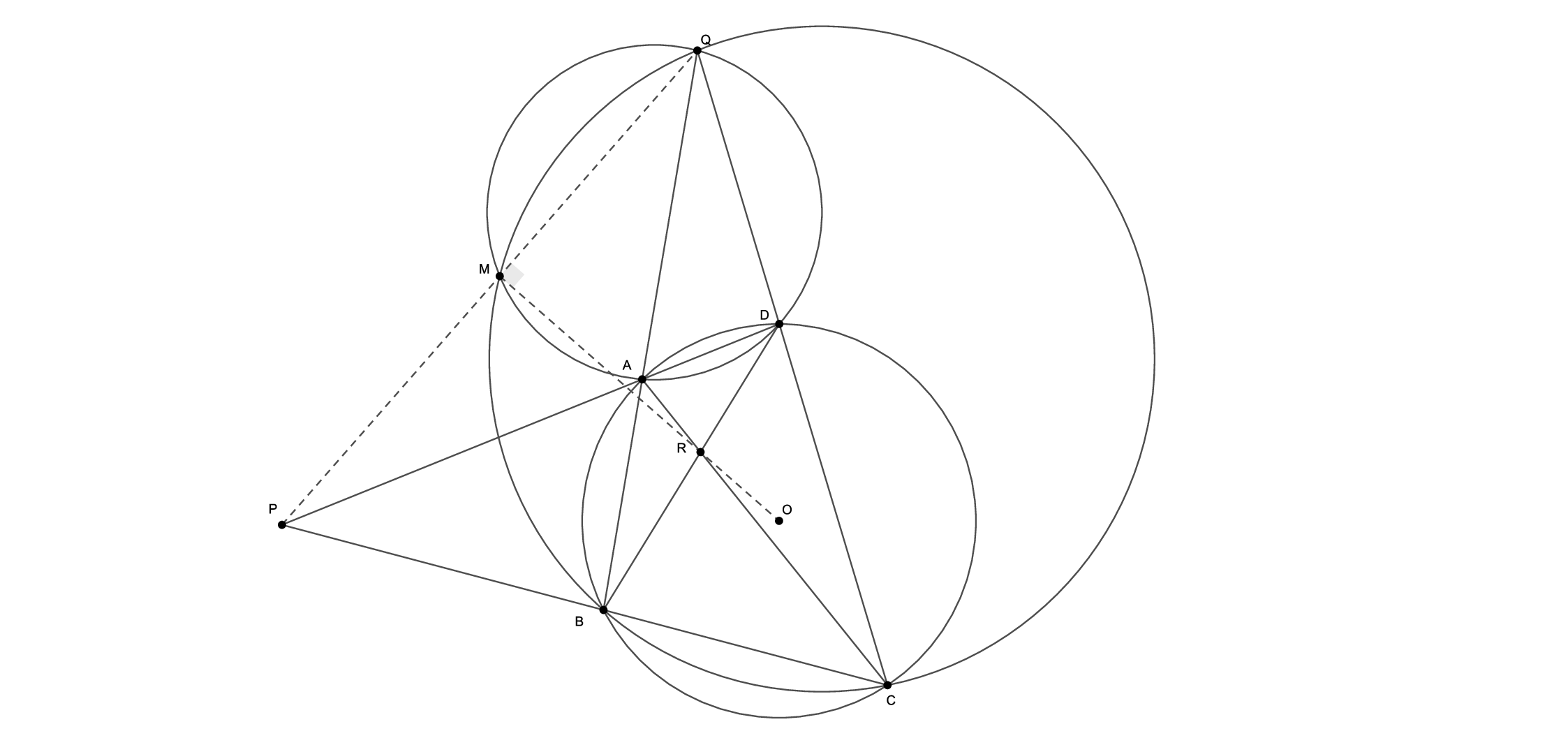}\\   
	\caption{Miquel Point of a Cyclic Quadrangle}
\end{figure}
\begin{proof}	
    Consider the points $R = AC \cap BD$ and $R^*$, which is the inverse of $R$ in the circumcircle of quadrilateral $ABCD$ which we denote by $\odot(ABCD)$. We shall show that $R^* = M$.\\
    
    Let $\overline{\chi}$ represent the directed length of a segment $\chi$. It is evident that $\overline{RR^*} \cdot \overline{RO} = RO^2 - r^2 = \overline{RA} \cdot \overline{RC}$, which implies that $R^*, O, A, C$ are concyclic. Similarly, $R^*, O, B, D$ are concyclic. Through some angle-chasing, we find that $\measuredangle AR^*B = \measuredangle AR^*O + \measuredangle OR^*B = \measuredangle ACO + \measuredangle ODB = 90^\circ - \measuredangle CDA + 90^\circ - \measuredangle BCD = \measuredangle QCD + \measuredangle CDQ = \measuredangle AQB$, which leads to $\measuredangle AR^*B = \measuredangle AQB$. It follows that $\square AR^*QB$ is cyclic. Similarly, $\square AR^*PD$, $\square BR^*CP$, and $\square CR^*DQ$ are cyclic, which gives us $R^* = M$.\\
    
    Finally, applying Brokard's theorem on $\square ABCD$ shows that $OM$ is an altitude of $\triangle POQ$. 
\end{proof}

\begin{lemma}
	\label{lemma:Simson}
	The angle between Simson lines $\ell_M, \ell_N$ of two points $M,N$ in $\odot (ABC)$ is half the measure of arc ${MN}$. More precisely, $\measuredangle (\ell_M, \ell_N) = \measuredangle MAN$.
\end{lemma}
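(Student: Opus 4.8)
The plan is to compute, for a single point on the circle, the direction of its Simson line relative to one fixed side of $\triangle ABC$ using directed angles modulo $\pi$, and then to subtract the two directions obtained for $M$ and $N$. Fixing a reference side makes the dependence on $M$ explicit and linear, so the difference collapses to an angle subtended at the vertices of the triangle.

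First I would fix $M\in\odot(ABC)$ and let $P$ and $Q$ be the feet of the perpendiculars from $M$ to lines $BC$ and $CA$; by definition the Simson line $\ell_M$ is the line $PQ$. Since $\measuredangle MPC=\measuredangle MQC=90^\circ$, the four points $M,P,C,Q$ lie on the circle with diameter $MC$. The inscribed-angle relation in this auxiliary circle gives $\measuredangle(QP,QM)=\measuredangle(CP,CM)=\measuredangle(CB,CM)$, where the last equality holds because $P$ lies on line $BC$. Combining this with $QM\perp CA$, which yields $\measuredangle(QM,CA)=90^\circ$, I obtain the key direction formula
\[
\measuredangle(\ell_M,CA)=\measuredangle(QP,CA)=\measuredangle(QP,QM)+\measuredangle(QM,CA)=\measuredangle(CB,CM)+90^\circ.
\]

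Next I would apply exactly the same computation to $N$, obtaining $\measuredangle(\ell_N,CA)=\measuredangle(CB,CN)+90^\circ$. Subtracting the two relations, the fixed reference direction $CA$ and the two right angles cancel:
\[
\measuredangle(\ell_M,\ell_N)=\measuredangle(\ell_M,CA)-\measuredangle(\ell_N,CA)=\measuredangle(CB,CM)-\measuredangle(CB,CN)=\measuredangle(CN,CM).
\]
Finally, since $A,C,M,N$ all lie on $\odot(ABC)$, the inscribed-angle theorem for directed angles identifies $\measuredangle(CN,CM)$ with $\measuredangle MAN$, which is the claimed identity; as an inscribed angle it is half the arc $MN$.

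I expect the computation itself to be routine once the directed-angle bookkeeping is in place, so the main obstacle is purely conventional. I must check carefully that the $90^\circ$ offsets arising from the two perpendicular feet genuinely cancel rather than add, and that the concluding inscribed-angle step is applied with the orientation that makes the sign of $\measuredangle MAN$ come out as stated rather than its negative. A secondary point worth remarking is that the direction of $\ell_M$ does not depend on which pair of sides is used to define $P$ and $Q$; this is automatic, since all three feet are collinear on $\ell_M$, so any two of them determine the same line and hence the same formula.
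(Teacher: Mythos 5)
The paper offers no proof of this lemma to compare against --- it is explicitly omitted and deferred to a standard reference --- so your proposal should be judged on its own. Your argument is the standard direction-of-the-Simson-line computation, and its core is correct: the auxiliary circle with diameter $MC$ through $M,P,C,Q$, the inscribed-angle step $\measuredangle(QP,QM)=\measuredangle(CP,CM)=\measuredangle(CB,CM)$, the additivity giving the direction formula $\measuredangle(\ell_M,CA)=\measuredangle(CB,CM)+90^\circ$, and the cancellation of the reference direction to get $\measuredangle(\ell_M,\ell_N)=\measuredangle(CN,CM)$ all check out. (One can confirm the direction formula with the unit circle: the feet of the perpendiculars from $m$ to $BC$ and $CA$ are $\tfrac12(b+c+m-bc\bar m)$ and $\tfrac12(c+a+m-ca\bar m)$, so $\ell_M$ has direction $(b-a)(m-c)/m$, which agrees with your formula modulo $\pi$.) Your closing remark that the formula is independent of the chosen pair of feet is also correct, since the three feet are collinear.

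The one weak point is exactly the step you flagged as the danger and then resolved by assertion. For concyclic $A,C,M,N$ the directed inscribed-angle theorem gives $\measuredangle(CN,CM)=\measuredangle(AN,AM)=\measuredangle NAM$, which modulo $\pi$ is $-\measuredangle MAN$, not $\measuredangle MAN$. The unit-circle computation confirms this: with the usual conventions ($\measuredangle(\ell_1,\ell_2)$ the rotation carrying $\ell_1$ to $\ell_2$, and $\measuredangle MAN=\measuredangle(AM,AN)$) one finds $\measuredangle(\ell_M,\ell_N)=\tfrac12(\arg m-\arg n)$ while $\measuredangle MAN=\tfrac12(\arg n-\arg m)$; the Simson line rotates at half speed but in the \emph{opposite} sense to the moving point. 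So the directed identity your chain actually proves is $\measuredangle(\ell_M,\ell_N)=\measuredangle NAM$, equivalently $\measuredangle(\ell_N,\ell_M)=\measuredangle MAN$. Whether this constitutes an error in your write-up or in the lemma's statement depends on a convention the paper never fixes (it nowhere defines the pairing order in $\measuredangle(\ell_M,\ell_N)$), and the undirected ``half the arc $MN$'' claim is fully established by your argument either way; but since the lemma is later used inside chains of directed-angle equalities (Proposition on the Simson line of $S_t$), you should state your orientation convention and write the final line as $\measuredangle(\ell_M,\ell_N)=\measuredangle NAM$ rather than silently matching the paper's sign.
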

The proof of the final lemma is omitted. It can be found in any standard text, such as \cite{chen2021euclidean}.
\subsection{Generalized Brocard Points}
\label{subsec:prelim}
Let a circle $\Gamma$ centered at $O$ intersect sides $BC,CA$, and $AB$ of a $\triangle ABC$ (oriented anticlockwise) at $\{A_1,A_2\}$, $\{B_1, B_2\}$, and $\{C_1, C_2\}$. Let $P$ and $Q$ be the Miquel Points of $\triangle A_1B_1C_1$, $\triangle A_2B_2C_2$ in $\triangle ABC$ respectively. 
\begin{theorem}[see \cite{alastor2020moody}]
	\label{thm:iso_conj}
    The points $P$ and $Q$ are isogonal conjugates in $\triangle ABC$, and we have an equality of angles $\measuredangle PA_1A_2 = \measuredangle PB_1B_2$ $=\measuredangle PC_1C_2$ $=-\measuredangle QA_2A_1$ $=-\measuredangle QB_2B_1$ $=\measuredangle QC_2C_1$. 
\end{theorem}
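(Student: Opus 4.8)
The plan is to prove two things: that $P$ and $Q$ are isogonal conjugates, and that the six listed angles are equal (with the indicated signs). Both follow most naturally from Lemma~\ref{lemma:spiral}, which tells us that the inscribed triangle $\triangle A_1B_1C_1$ is the image of the pedal triangle of $P$ under a spiral similarity $\Psi_P$ centered at $P$, and similarly $\triangle A_2B_2C_2$ is the image of the pedal triangle of $Q$ under $\Psi_Q$ centered at $Q$.

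First I would set up the isogonal conjugacy. Recall that $\{A_1,A_2\}$, $\{B_1,B_2\}$, $\{C_1,C_2\}$ are the two intersection points of the common circle $\Gamma$ with the three sidelines. The key observation is that the foot of the pedal triangle of $P$ on side $BC$ and the foot of the pedal triangle of $Q$ on $BC$ are related by the fact that $A_1,A_2$ are symmetric about the foot of the perpendicular from $O$ to $BC$ (since $O$ is the center of $\Gamma$, the chord $A_1A_2$ has its midpoint at that foot). I would show that the Miquel point $P$ projects onto the midpoint-structure in a way that forces $\measuredangle PAB = \measuredangle QAC$, and cyclically on the other vertices; this is exactly the isogonal-conjugate condition. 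Concretely, because $P$ is the Miquel point of $\triangle A_1B_1C_1$, the circle $\odot(AB_1C_1)$ passes through $P$, so $\measuredangle PAB_1 = \measuredangle PC_1B_1$ along that circle; doing the analogous computation for $Q$ using $\odot(AB_2C_2)$ and exploiting the reflection symmetry of the two intersection points about the perpendicular foot from $O$ should yield $\measuredangle(AP,AB) = \measuredangle(AC,AQ)$, i.e. $AP$ and $AQ$ are isogonal with respect to angle $A$. Repeating at $B$ and $C$ gives that $P,Q$ are isogonal conjugates.

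For the equality of the six angles, I would use the spiral similarity directly. The angle $\measuredangle PA_1A_2$ is the angle at which $P$ views the directed chord $A_1A_2$ lying on line $BC$; equivalently it is $\measuredangle PA_1B$ up to the orientation of $A_2$ relative to $B$. Since $\Psi_P$ sends the pedal foot $D'$ on $BC$ to $A_1$ and has a fixed rotation angle and ratio, the directed angle $\measuredangle PA_1D'$ (where $D'$ is the pedal foot) equals the spiral angle, and this same spiral angle appears at $B_1$ and $C_1$ because $\Psi_P$ maps the whole pedal triangle rigidly-up-to-scaling. Thus $\measuredangle PA_1A_2 = \measuredangle PB_1B_2 = \measuredangle PC_1C_2$ is just the assertion that the spiral similarity $\Psi_P$ has a single well-defined rotation angle, which Lemma~\ref{lemma:spiral} already guarantees through the chain of directly similar triangles $\triangle PA_1D' \stackrel{+}{\sim} \triangle PB_1E' \stackrel{+}{\sim} \triangle PC_1F'$. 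The same argument applied to $\Psi_Q$ gives $\measuredangle QA_2A_1 = \measuredangle QB_2B_1 = \measuredangle QC_2C_1$, and the minus signs in the statement record that $Q$'s spiral rotates in the opposite sense because $A_2,A_1$ play the swapped roles of the two chord endpoints relative to $A_1,A_2$.

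The main obstacle will be bookkeeping the directed angles and the signs carefully: I must fix a consistent convention for which of the two intersection points is labeled with subscript $1$ versus $2$ on each side, verify that this labeling is compatible across the three sides so that the Miquel points are well-defined, and then confirm that the reflection of the chord endpoints about the perpendicular from $O$ is precisely what converts the $P$-spiral-angle into the negative of the $Q$-spiral-angle (accounting for the sign pattern $+,+,+,-,-,+$ in the statement, where the asymmetry in signs must trace back to orientation of the respective similarities). I would verify the sign convention on one vertex explicitly and then argue the rest by the cyclic symmetry of the construction.
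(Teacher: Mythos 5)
The easy third of your plan is sound and matches the paper's opening move: since $P$ is the Miquel point, $\measuredangle PA_1A_2=\measuredangle PB_1B_2=\measuredangle PC_1C_2$ follows at once (each is the directed angle from $PA_1$, $PB_1$, $PC_1$ to the corresponding sideline), and likewise the three $Q$-angles are equal among themselves. But the entire content of the theorem is the cross relation between the $P$-family and the $Q$-family together with isogonality, and at exactly that point your text says the reflection symmetry ``should yield'' the conclusion without supplying a mechanism. The mechanism you name does not exist as stated: the reflection swapping $A_1\leftrightarrow A_2$ is the reflection in the perpendicular from $O$ to $BC$, the one swapping $B_1\leftrightarrow B_2$ is in the perpendicular to $CA$, and so on --- three \emph{different} reflections that do not assemble into any single transformation carrying $(P;A_1,B_1,C_1)$ to $(Q;A_2,B_2,C_2)$, so nothing in the sketch relates $\theta_P$ to $\theta_Q$ or the line $AP$ to $AQ$. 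Moreover your two halves are circular: following your own route, $\measuredangle(AB,AP)=\measuredangle C_1B_1P$ via $\odot(APB_1C_1)$ and $\measuredangle(AQ,AC)=\measuredangle QC_2B_2$ via $\odot(AQB_2C_2)$, and since $\measuredangle C_1B_1B_2=\measuredangle C_1C_2B_2$ on $\Gamma$, the isogonality condition at $A$ is \emph{equivalent} to $\measuredangle PB_1B_2=-\measuredangle QC_2C_1$ --- i.e.\ each half of what you want presupposes the other. Finally, the sign pattern $+,+,+,-,-,+$ you set out to explain is a typo in the statement: the configuration is invariant under cycling $A\to B\to C$, and the paper's own proof imposes $\measuredangle PC_1C_2=-\measuredangle QC_2C_1$; all three cross relations carry the minus sign, so an argument engineered to produce an asymmetric pattern would fail, not succeed.

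The missing idea, which the paper uses to break this deadlock, is a phantom-point reversal rather than a direct chase. Keep $P$ and $A_1,B_1,C_1$; \emph{redefine} $Q$ as the isogonal conjugate of $P$, and redefine $A_2,B_2,C_2$ on the sidelines by the angle conditions $\measuredangle PA_1A_2=-\measuredangle QA_2A_1$, etc. Then $Q$ is automatically the Miquel point of the new $\triangle A_2B_2C_2$, and isogonality plus the angle conditions give the inverse similarities $\triangle CPA_1\stackrel{-}{\sim}\triangle CQB_2$ and $\triangle CQA_2\stackrel{-}{\sim}\triangle CPB_1$, whence $CA_1\cdot CA_2=CB_1\cdot CB_2$; the analogous relations at $A$ and $B$ plus a radical-axis argument force all six points onto one circle, which must be $\Gamma$ since it contains $A_1,B_1,C_1$, so the redefined points coincide with the originals. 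If you insist on a direct proof you need a genuine substitute for this step (e.g.\ computing $\theta_P$ and $\theta_Q$ trigonometrically from the powers of $A,B,C$ with respect to $\Gamma$); the per-side chord symmetry about the feet of the perpendiculars from $O$ alone cannot deliver it.
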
 
\begin{figure}[!h]
	\includegraphics[scale=0.3]{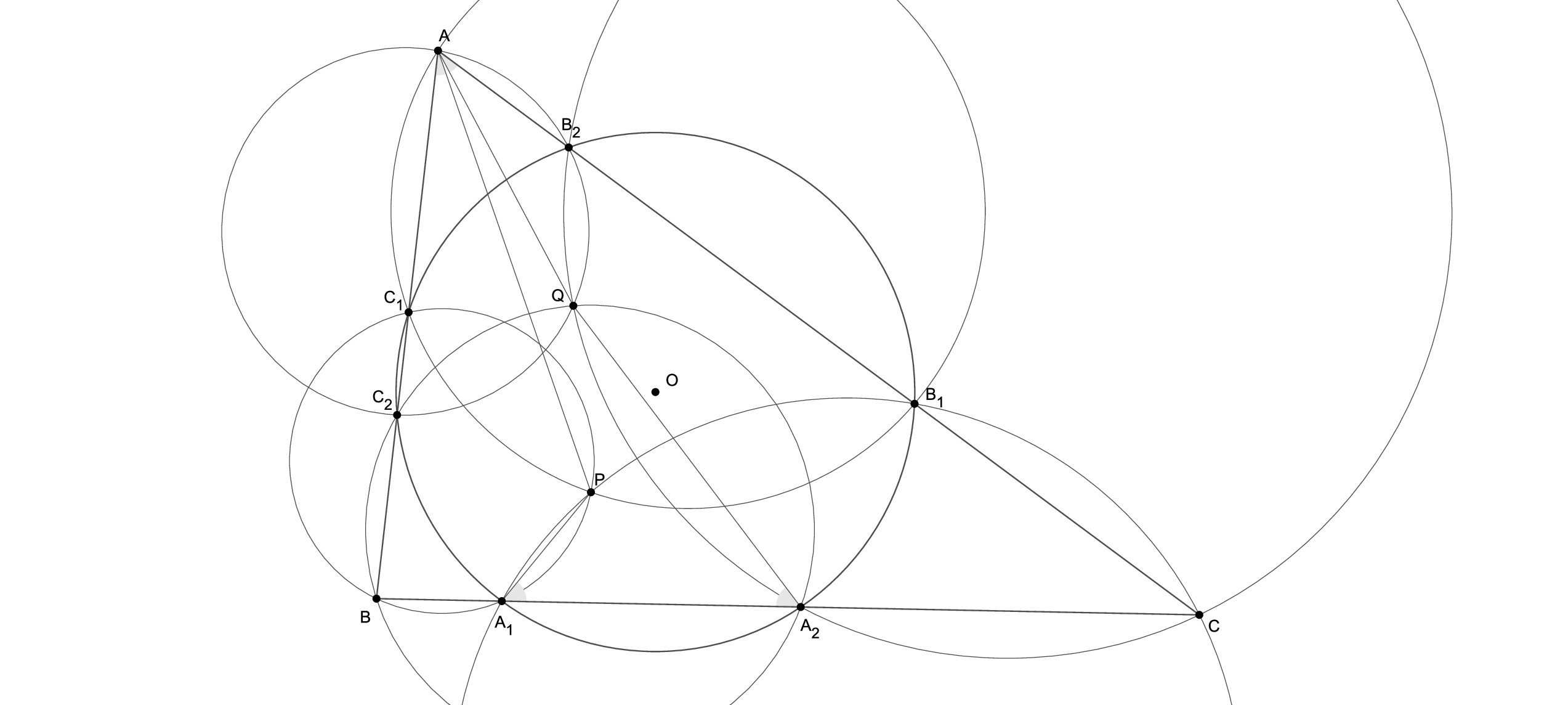}
	\caption{Generalized Brocard Points}
\end{figure}
\begin{proof}
	We provide a combined proof for both parts, inspired by Minjae Kwon's proof in \cite{kwon}. Let us redefine points $Q$, $A_2$, $B_2$, and $C_2$ from the given point $P$.

    First, we redefine $Q$ as the isogonal conjugate of $P$ in $\triangle ABC$. Next, we redefine points $A_2$, $B_2$, and $C_2$ as new points on $BC$, $CA$, and $AB$, respectively, such that the following conditions hold:
        \begin{align*}
        \measuredangle PA_1A_2 &= -\measuredangle QA_2A_1, \\
        \measuredangle PB_1B_2 &= -\measuredangle QB_2B_1, \\
        \measuredangle PC_1C_2 &= -\measuredangle QC_2C_1.
        \end{align*}

    Since $P$ is the Miquel point of $\triangle A_1B_1C_1$, it follows that $-\measuredangle PA_1C = -\measuredangle PB_1A = -\measuredangle PC_1B = \measuredangle QA_2C = \measuredangle QB_2A = \measuredangle QC_2B$. This equality allows us to deduce that $Q$ is the Miquel Point of $\triangle A_2B_2C_2$.\\

    We now proceed to show that $A_1$, $A_2$, $B_1$, $B_2$, $C_1$, and $C_2$ are concyclic. From the new definitions, we have:
        \begin{align*}
        \measuredangle CA_1P &= -\measuredangle CB_2Q, \\
        \measuredangle PCA_1 &= -\measuredangle QCB_2.
        \end{align*}

    Thus, we can observe that $\triangle CPA_1 \stackrel{-}{\sim} \triangle CQB_2$. Analogously, we have $\triangle CQA_2 \stackrel{-}{\sim} \triangle CPB_1$, leading to the relation $$\frac{CA_1}{CB_2} = \frac{CB_1}{CA_2} = \frac{CP}{CQ}.$$ Consequently, $CA_1\cdot CA_2 = CB_1 \cdot CB_2,$ which implies that the quadrangle $\square A_1A_2B_1B_2$ is cyclic. Similarly, one shows that the quadrangles $\square B_1B_2C_1C_2$ and $\square A_1A_2C_1C_2$ are also cyclic.\\
    
    If the circles $\odot(A_1A_2B_1B_2)$, $\odot(A_1A_2C_1C_2)$, and $\odot(B_1B_2C_1C_2)$ were distinct, their pairwise radical axes would meet at a point, which would lead to a contradiction. Hence, the three circles coincide, confirming that $A_1$, $A_2$, $B_1$, $B_2$, $C_1$, and $C_2$ lie on a single circle. These new definitions of $A_2$, $B_2$, and $C_2$ agree with the original definitions, thereby completing the proof.
\end{proof}

\begin{rem} Consider the spiral similarities $\Psi_P, \Psi_Q$ centered at $P,Q$ taking their pedal triangles to $\triangle A_1B_1C_1$ and $\triangle A_2B_2C_2$ respectively. Using Theorem~\ref{thm:iso_conj}, we observe that the (directed) angles of rotation $\theta_P, \theta_Q$ associated with $\Psi_P, \Psi_Q$ bear the relation $$\theta_P+\theta_Q=0.$$
\end{rem}

\section{A Generalization of the Brocard Circle and Triangles}
We now build on the configuration from Section \ref{subsec:prelim}, by defining the three points $$T_A = PA_1\cap QA_2, T_B= PB_1\cap QB_2, \text{ and } T_C= PC_1\cap QC_2.$$ We also introduce three other points $$A'=\odot (AB_1C_1) \cap \odot (AB_2C_2),$$ $$B'=\odot (BC_1A_1) \cap \odot (BC_2A_2),$$ \text{and} $$C'=\odot (CA_1B_1) \cap (CA_2B_2).$$ 

\begin{proposition}
	\label{thm:rad_ax}
	The lines $AA', BB', CC'$ are concurrent.
\end{proposition}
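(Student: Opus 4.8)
The plan is to recognize each of the three lines as a radical axis and then reduce the statement to a concurrency that depends only on the six points on $\Gamma$. Since $A'$ is by definition the second intersection of $\odot(AB_1C_1)$ and $\odot(AB_2C_2)$, the line $AA'$ is precisely the radical axis (common chord) of these two circles; likewise $BB'$ and $CC'$ are the radical axes of the pairs $\{\odot(BC_1A_1),\odot(BC_2A_2)\}$ and $\{\odot(CA_1B_1),\odot(CA_2B_2)\}$.

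I would then bring in the circle $\Gamma$ as a third circle. Because $B_1,C_1\in\Gamma$ and $B_1,C_1\in\odot(AB_1C_1)$, the radical axis of $\Gamma$ and $\odot(AB_1C_1)$ is the line $B_1C_1$; similarly the radical axis of $\Gamma$ and $\odot(AB_2C_2)$ is $B_2C_2$. Applying the radical center theorem to $\Gamma$, $\odot(AB_1C_1)$, $\odot(AB_2C_2)$ shows that $AA'$ passes through $X_A:=B_1C_1\cap B_2C_2$. Cyclically, $BB'$ passes through $X_B:=C_1A_1\cap C_2A_2$ and $CC'$ through $X_C:=A_1B_1\cap A_2B_2$. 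In particular, the Miquel points $P,Q$ play no role here: it remains only to show that the cevians $AX_A,BX_B,CX_C$ of $\triangle ABC$ are concurrent, a statement about a conic meeting the three sidelines.

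For that final step I would pass to barycentric coordinates with reference triangle $ABC$ and write $\Gamma$ as a general conic. Restricting the conic to the three sidelines expresses the coordinate ratios of $\{B_1,B_2\}$, $\{C_1,C_2\}$, and $\{A_1,A_2\}$ as the roots of three quadratics. Intersecting the chords $B_1C_1$ and $B_2C_2$ gives $X_A$ explicitly, and projecting from $A$ onto $BC$ yields a cevian foot $A^*$ whose division ratio $\overline{BA^*}/\overline{A^*C}$ equals, up to sign, the quotient of two root-differences, one coming from side $AB$ and one from side $CA$. The decisive feature is that each sidewise root-difference appears once in a numerator and once in a denominator when the three ratios are multiplied, so the product telescopes and Ceva's theorem delivers the concurrency. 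This computation never uses concyclicity beyond the fact that each side meets $\Gamma$ twice, so the result in fact holds for an arbitrary conic.

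The main obstacle is not the telescoping, which is immediate, but the orientation bookkeeping: one must fix consistent parametrizations of the three sides and track the signs of the root-differences so as to confirm that the signed Ceva product is $+1$ rather than $-1$. A coordinate-free alternative worth attempting is to pin down the common point directly as the radical center of a suitable triple of circles, which would bypass the sign analysis; I nevertheless expect the barycentric route to be the more reliable one to carry out in full.
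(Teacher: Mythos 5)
Your proposal is correct, and after its first step it takes a genuinely different route from the paper. Both arguments open identically: $AA'$ is the radical axis of $\odot(AB_1C_1)$ and $\odot(AB_2C_2)$, so the radical center with $\Gamma$ forces $AA'$ through $X_A=B_1C_1\cap B_2C_2$, and cyclically. The paper then stays projective: by Pascal on the hexagon $A_1B_1C_1A_2B_2C_2$ the points $A_0=B_1C_2\cap C_1B_2$ and its analogues are collinear; since $A$, $X_A$, $A_0$ are the diagonal points of the quadrangle $B_1C_1B_2C_2$, Brokard's theorem identifies $AA'$ as the polar of $A_0$ with respect to $\Gamma$, and La Hire sends all three lines through the pole $R$ of the Pascal line. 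Your Ceva computation does go through as planned: with $A_i=(0:1:a_i)$, $B_i=(b_i:0:1)$, $C_i=(1:c_i:0)$, the trace of $AX_A$ on $BC$ is $(0:c_1c_2(b_2-b_1):c_2-c_1)$ and cyclically, so in the Ceva product the three root-differences cancel and the residue $a_1a_2b_1b_2c_1c_2$ equals $1$ by Vieta on the three restrictions of the conic (Carnot's theorem); your sign worry is moot, since homogeneous barycentric Ceva returns $+1$ on the nose. One imprecision: each ratio is not merely a quotient of root-differences but carries a root-product factor ($c_1c_2$, etc.), and the cancellation of these factors is exactly where the hypothesis that all six points lie on one conic enters. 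As for trade-offs: your route is more elementary (no poles and polars) and makes transparent that the reduced concurrency holds for an arbitrary conic, but it names no concurrency point, whereas the paper's identification of the point as the pole $R$ of the line $A_0B_0C_0$ is used downstream, e.g.\ in Theorem~\ref{thm:gen_brocard} and to recover $R=K$ in the classical specialization, so adopting your proof would require establishing the polar description of $R$ separately; also, your suggested fallback of exhibiting the point as a radical center is likely a dead end, since the three lines are radical axes of three different pairs of circles and their common point is a pole with respect to $\Gamma$, not in general a radical center.
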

\begin{proof}	
    The points $A_0 = B_1C_2 \cap C_1B_2$, $B_0 = A_1C_2 \cap C_1A_2$, and $C_0 = A_1B_2 \cap B_1A_2$ lie on the same line, due to Pascal's theorem applied to the hexagon $A_1B_1C_1A_2B_2C_2$.\\    
    
    Now, let $R$ be the pole of the line $A_0B_0C_0$ with respect to the circle $\Gamma$. By applying the Radical Axis theorem to $\odot(AB_1C_1)$, $\odot(AB_2C_2)$, and $\Gamma$, we find that lines $B_1C_1$, $B_2C_2$, and $AA'$ are concurrent. Moreover, from Brokard's theorem applied to the quadrilateral $B_1C_1B_2C_2$, we deduce that $AA'$ is the polar of point $A_0$ with respect to $\Gamma$. As $AA'$ passes through $R$ due to La Hire's theorem, similarly, $BB'$ and $CC'$ pass through $R$. Thus, we conclude that the lines $AA'$, $BB'$, and $CC'$ are concurrent at point $R$.
\end{proof}
\subsection{Generalization of the Brocard Circle}
We present the generalization of the Brocard circle in the following theorem.
\label{subsec:bro_cir}
\begin{theorem}
	\label{thm:gen_brocard}
	The points $P,Q,O,R,A',B',C',T_A,T_B,T_C$ lie on a circle. 
\end{theorem}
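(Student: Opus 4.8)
The plan is to exhibit a single circle $\mathcal{B}$ through $P$ and $Q$ on which all the remaining points lie, and to pin down $O$ and $R$ as the endpoints of one of its diameters (generalizing the classical fact that $O$ and $K$ are antipodal on the Brocard circle). Write $\theta$ for the rotation angle of $\Psi_P$; by the Remark following Theorem~\ref{thm:iso_conj} the rotation angle of $\Psi_Q$ is $-\theta$. The key preliminary observation is that, since $P$ and $Q$ are isogonal conjugates, their pedal triangles share a common pedal circle centered at the midpoint $N$ of $PQ$; because $\Psi_P$ (resp.\ $\Psi_Q$) carries this pedal circle onto the circumcircle of $\triangle A_1B_1C_1$ (resp.\ $\triangle A_2B_2C_2$), which is $\Gamma$, it must send $N$ to $O$. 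Thus $\Psi_P(N)=\Psi_Q(N)=O$.

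First I would show that $O,T_A,T_B,T_C$ all view the segment $PQ$ under the directed angle $-2\theta$, so that together with $P,Q$ they lie on one circle $\mathcal{B}$. For $T_A=PA_1\cap QA_2$: if $A_1',A_2'$ denote the feet of the perpendiculars from $P,Q$ to $BC$, then $PA_1'\parallel QA_2'$, while $\Psi_P,\Psi_Q$ rotate these perpendiculars into $PA_1,QA_2$ by $\theta$ and $-\theta$; hence $\measuredangle PT_AQ=\measuredangle(A_1P,A_2Q)=-2\theta$, and likewise for $T_B,T_C$. For $O$: using $\Psi_P(N)=O$ and $\Psi_Q(N)=O$ with $N\in PQ$ gives $\measuredangle(PQ,PO)=\theta$ and $\measuredangle(QP,QO)=-\theta$, whence $\measuredangle POQ=-2\theta$ by the directed-angle sum in $\triangle OPQ$.

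Next I would place $A',B',C'$ on this same circle by the identical mechanism. Since $P\in\odot(AB_1C_1)$ and $Q\in\odot(AB_2C_2)$, chasing inscribed angles through $A'$ in these two circles yields
\[
\measuredangle PA'Q=\measuredangle PA'A+\measuredangle AA'Q=\measuredangle(C_1P,AB)+\measuredangle(AB,C_2Q)=\measuredangle(C_1P,C_2Q).
\]
Because $C_1,C_2$ arise from the pedal feet on $AB$ after applying $\Psi_P,\Psi_Q$, the lines $C_1P,C_2Q$ make angles $+\theta,-\theta$ with the common perpendicular direction to $AB$, so $\measuredangle PA'Q=-2\theta$; the same holds for $B',C'$. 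At this stage $P,Q,O,T_A,T_B,T_C,A',B',C'$ all lie on $\mathcal{B}$.

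It remains to capture $R$, and this is the step I expect to be the crux. The idea is to recognize $A'$ as a Miquel point to which Lemma~\ref{lemma:cyclic} applies: $A'$ is the Miquel point of the cyclic quadrangle $B_1B_2C_2C_1$ inscribed in $\Gamma$, whose diagonal points are $A=B_1B_2\cap C_1C_2$ and $U=B_1C_1\cap B_2C_2$. Lemma~\ref{lemma:cyclic} then gives $A'\in AU$ and $OA'\perp AU$. On the other hand Proposition~\ref{thm:rad_ax} shows $A,A',R$ are collinear, so the lines $AU$ and $AA'$ coincide and hence $\measuredangle OA'R=90^\circ$; the analogous facts hold for $B',C'$. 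Letting $O^\ast$ be the antipode of $O$ on $\mathcal{B}$, the inscribed right angle gives $\measuredangle OA'O^\ast=90^\circ=\measuredangle OA'R$, so $R,A',O^\ast$ are collinear, and likewise $R,B',O^\ast$ and $R,C',O^\ast$; since the three lines $A'R,B'R,C'R$ already concur at $R$, we get $O^\ast=R$. Thus $R$ lies on $\mathcal{B}$ as the point diametrically opposite $O$, completing the proof. The main obstacles are the correct identification of $A'$ as a Lemma~\ref{lemma:cyclic} Miquel point and the careful bookkeeping of directed angles in the $-2\theta$ computations; once $A',B',C'$ are on $\mathcal{B}$, folding in $R$ through the antipode argument is routine.
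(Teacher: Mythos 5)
Your proof is correct, and although it shares the paper's two central ingredients, its logical decomposition is genuinely different in how $O$ and $R$ enter. The paper works from the other end: it first identifies $A'$ as the Miquel point of the cyclic quadrangle $C_1C_2B_2B_1$ and applies Lemma~\ref{lemma:cyclic} together with Proposition~\ref{thm:rad_ax} to get $\measuredangle OA'R=90^\circ$ (and likewise at $B',C'$), so that $A',B',C'$ lie on the circle with diameter $OR$ --- putting $O$ and $R$ on the circle by construction --- and only then transfers $P,Q$ onto $\odot(A'B'C')$ via the equal angles $\measuredangle PA'Q=\measuredangle PB'Q=\measuredangle PC'Q=\theta_Q-\theta_P$ and adds $T_A,T_B,T_C$ by matching $\measuredangle PT_AQ$ against $\measuredangle PA'Q$. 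You instead build the circle as the locus $\{X:\measuredangle PXQ=-2\theta\}$ (your $-2\theta$ agrees with the paper's $\theta_Q-\theta_P$, and your angle chases for $T_A,T_B,T_C,A',B',C'$ are essentially the paper's, routed through the pedal feet), and your insertion of $O$ uses an ingredient that appears nowhere in the paper: the classical fact that isogonal conjugates share a pedal circle centered at the midpoint $N$ of $PQ$, whence $\Psi_P(N)=\Psi_Q(N)=O$ and $\measuredangle POQ=-2\theta$. This is a valid, self-contained argument, and it has the merit of explaining structurally \emph{why} $O$ sits on the generalized Brocard circle (it is the common image of $N$ under both spiral maps), which the paper's diameter-first route leaves implicit. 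The cost is that you must then recover $R$ separately: your antipode argument (from $\measuredangle OA'R=\measuredangle OB'R=\measuredangle OC'R=90^\circ$, if $R\neq O^*$ then $A',B',C'$ would be collinear on the line $RO^*$, a contradiction) is sound, and your identification of the diagonal points $A=B_1B_2\cap C_1C_2$ and $U=B_1C_1\cap B_2C_2$ is exactly the paper's use of Lemma~\ref{lemma:cyclic}; but it relies on the same Lemma~\ref{lemma:cyclic}/Proposition~\ref{thm:rad_ax} pair the paper deploys at the outset, so you end up using strictly more machinery (the shared pedal circle) than the paper does, in exchange for a cleaner conceptual placement of $O$. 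The implicit nondegeneracy assumptions ($\theta\neq 0$, the points $A',B',C'$ distinct and noncollinear) are the same ones the paper makes silently.
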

\begin{figure}[!h]
	\includegraphics[scale=0.31]{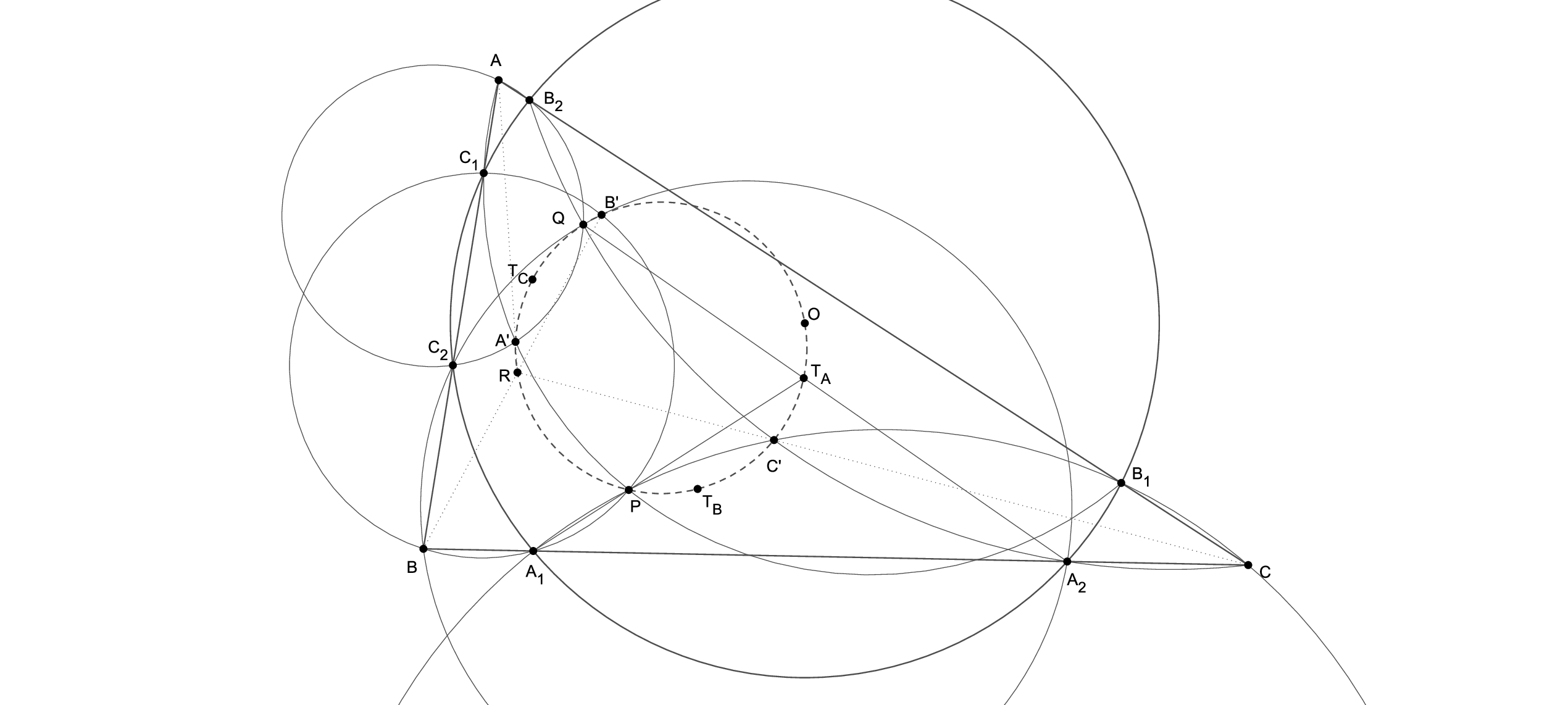}\\
	\caption{Generalized Brocard Circle}
\end{figure}
\begin{proof}
	Observe that $A'$ is the Miquel point of $\square C_1C_2B_2B_1$ since $A'=\odot(AB_1C_1)\cap \odot(AB_2C_2)$. By applying Lemma~\ref{lemma:cyclic}, we deduce that $\angle OA'A=90^\circ$. Considering that $AA'$ passes through $R$, we can infer that $\angle OA'R=90^\circ$. Similarly, $\angle OB'R=\angle OC'R=90^\circ$, from which we can conclude that $A',B',C'$ lie on the circle with diameter $OR$.\\

    Upon chasing angles, we find $\measuredangle PA'Q=\measuredangle PA'C-\measuredangle QA'C=\measuredangle PA_1C - \measuredangle QA_2C$. Employing Theorem~\ref{thm:iso_conj}, we have $\measuredangle PA_1C=-\measuredangle QA_2C$, which yields $\measuredangle PA'Q=\theta_Q-\theta_P$ (also equal to $2\theta_Q$). As this expression is symmetric in $A,B,C$, it follows that $\measuredangle PA'Q=\measuredangle PB'Q=\measuredangle PC'Q$. Thus, $P$ and $Q$ lie on the circle $\odot(A'B'C')$.\\

    Furthermore, we observe that $\measuredangle PT_AQ = \measuredangle PA_1A_2 + \measuredangle A_1A_2Q = \theta_Q-\theta_P = \measuredangle PA'Q$. Consequently, $T_A$ lies on $\odot(A'B'C')$. Similarly, we can deduce that $T_B$ and $T_C$ also lie on $\odot(A'B'C')$. Consequently, the points $P,Q,O,R,A',B',C',T_A,T_B,T_C$ are all concyclic.
\end{proof}

\begin{corollary}
	\label{corr:equidistant}
	The points $P$ and $Q$ are equidistant from $R$.
\end{corollary}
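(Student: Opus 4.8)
The plan is to first reduce the statement to showing that $P$ and $Q$ are equidistant from the centre $O$ of $\Gamma$, and then to establish the latter using the spiral similarities $\Psi_P,\Psi_Q$ of Lemma~\ref{lemma:spiral}. From the proof of Theorem~\ref{thm:gen_brocard} we know that $A',B',C'$ lie on the circle with diameter $OR$, and that $P$ and $Q$ lie on the same circle $\odot(A'B'C')$; hence all of $O,P,Q,R$ are concyclic on a circle of which $OR$ is a diameter. Consequently $\angle OPR=\angle OQR=90^\circ$, so $\triangle OPR$ and $\triangle OQR$ are right triangles sharing the hypotenuse $OR$. Pythagoras then gives $RP^2=OR^2-OP^2$ and $RQ^2=OR^2-OQ^2$, so it suffices to prove $OP=OQ$.

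To prove $OP=OQ$, I would recall that $\Psi_P$ carries the pedal triangle of $P$ onto $\triangle A_1B_1C_1$ and that $\Psi_Q$ carries the pedal triangle of $Q$ onto $\triangle A_2B_2C_2$ (Lemma~\ref{lemma:spiral}). In particular each spiral similarity maps the circumcircle of the relevant pedal triangle onto the common circle $\Gamma=\odot(A_1B_1C_1)=\odot(A_2B_2C_2)$ of radius $r$. Here I use the classical fact that isogonal conjugates share a common pedal circle whose centre $N$ is the midpoint of the segment $PQ$; write $\rho$ for its radius. Then $\Psi_P$ has ratio $k_P=r/\rho$ and sends $N$ to $O$, so $OP=k_P\cdot PN$; likewise $\Psi_Q$ has ratio $k_Q=r/\rho=k_P$ and sends $N$ to $O$, so $OQ=k_Q\cdot QN$. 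Since $N$ is the midpoint of $PQ$ we have $PN=QN$, and therefore $OP=OQ$.

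Combining the two paragraphs, $RP^2=OR^2-OP^2=OR^2-OQ^2=RQ^2$, which yields $RP=RQ$, as desired. Equivalently, the two right triangles $\triangle OPR$ and $\triangle OQR$ become congruent once $OP=OQ$ is known, so $R$ lies on the perpendicular bisector of $PQ$.

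The main obstacle is the equality $OP=OQ$: everything else is a direct consequence of Theorem~\ref{thm:gen_brocard} and Pythagoras. The two ingredients that make $OP=OQ$ work are that the two spiral similarities have the \emph{same} ratio (forced by both pedal circles mapping to circles of the common radius $r$) and that they share the \emph{same} centre image $N=\tfrac{1}{2}(P+Q)$ (the common pedal-circle centre of the isogonal pair $P,Q$). Verifying that the circumcircle of each pedal triangle is exactly this common pedal circle, and that $\Psi_P,\Psi_Q$ really carry it to $\Gamma$, is the step requiring care; the shared-pedal-circle property of isogonal conjugates is classical but is not proved in the excerpt.
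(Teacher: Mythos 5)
Your proposal is correct, but the mechanism you use for the crucial equality $OP=OQ$ is genuinely different from the paper's. The paper stays entirely inside its own configuration: by Theorem~\ref{thm:iso_conj} we have $\measuredangle PA_1A_2=-\measuredangle QA_2A_1$, so $\triangle T_AA_1A_2$ is isosceles; hence the perpendicular bisector $OT_A$ of $A_1A_2$ bisects $\angle PT_AQ$, and since $O$ lies on $\odot(T_APQ)$ by Theorem~\ref{thm:gen_brocard}, $O$ is the arc midpoint of $PQ$ on that circle, giving $OP=OQ$; the diameter $OR$ then forces $RP=RQ$ exactly as in your final reduction (your Thales--Pythagoras step and the paper's closing sentence are the same observation). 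You instead obtain $OP=OQ$ from Lemma~\ref{lemma:spiral}: the spiral similarities $\Psi_P,\Psi_Q$ carry the common pedal circle of the isogonal pair $(P,Q)$ onto $\Gamma=\odot(A_1B_1C_1)=\odot(A_2B_2C_2)$, hence both have ratio $r/\rho$ and both send its centre $N=\tfrac{1}{2}(P+Q)$ to $O$, so $OP=\tfrac{r}{\rho}\,PN=\tfrac{r}{\rho}\,QN=OQ$. This argument is valid (a similarity does send centre to centre and scale radii by its ratio, and each pedal triangle's circumcircle is by definition the pedal circle), and it is arguably more conceptual: it is precisely the special case $T=O$ of Kwon's result quoted in the paper's remark after this corollary, since the six points lying on $\Gamma$ makes the perpendicular bisectors of $A_1A_2$, $B_1B_2$, $C_1C_2$ concur at $O$. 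The trade-off, which you correctly flag yourself, is that your route imports the classical shared-pedal-circle theorem for isogonal conjugates, which the paper never proves, whereas the paper's angle-chasing proof is self-contained given Theorems~\ref{thm:iso_conj} and~\ref{thm:gen_brocard}.
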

\begin{proof}
    We can observe from Theorem~\ref{thm:iso_conj} that $\triangle T_AA_1A_2$ is isosceles, with $T_AA_1=T_AA_2$. Since $OT_A$ is the perpendicular bisector of $A_1A_2$, $OT_A$ is the angle bisector of $\angle A_1T_AA_2$. So, $OT_A$ bisects $\angle PT_AQ$. Also, $O$ lies on the circumcircle of $\triangle T_APQ$. Therefore, $OP=OQ$. Since $OR$ is a diameter of this circle, the result follows.
\end{proof}

\begin{rem} Minjae Kwon (in \cite{kwon}) shows a more general result: For $\triangle DEF$ and $\triangle XYZ$ inscribed in $\triangle ABC$, where $\{D,X\}, \{E,Y\}$, and $\{F, Z\}$ are points on $BC,CA$, and $AB$ respectively.) Suppose that the perpendicular bisectors of $DX,EY,CZ$ concur at a point, $T$. If $O_1, O_2$ are the Miquel points of $\triangle DEF, \triangle XYZ$, then $TO_1=TO_2$.
\end{rem}
\subsection{Revisiting the Brocard Points, Circle and Triangles}
	\label{subsec:revisit}
As discussed in Section \ref{sec:intro}, a special case arises when $A_1=B$, $B_1=C$, $C_1=A$, $A_2=C$, $B_2=A$, $C_2=B$. In this scenario, the points $P$ and $Q$ coincide with the first and second Brocard points, $\Omega$ and $\Omega'$, respectively, of $\triangle ABC$. The circle $\Gamma$ transforms into the circumcircle of $\triangle ABC$, and $O$ becomes the circumcenter. Additionally, the points $A_0$, $B_0$, and $C_0$ correspond to the intersections of the tangents to the circumcircle at the vertices of $\triangle ABC$ with the opposite sides. Then, the symmedian point $K$ of $\triangle ABC$ coincides with the pole of the line $A_0B_0C_0$ with respect to the circumcircle, thus making $R=K$.\\

Furthermore, we observe that the triangles $\triangle T_AT_BT_C$ and $\triangle A'B'C'$ become the first and second Brocard triangles, respectively. One can now see the proofs of several results from Section \ref{sec:intro}. Indeed, applying Theorem~\ref{thm:gen_brocard} demonstrates that the circle with diameter $OK$ passes through $\Omega$ and $\Omega'$ and also through the vertices of the first and second Brocard Triangles. Moreover, Corollary~\ref{corr:equidistant} reveals that $OK$ is the perpendicular bisector of $\Omega\Omega'$.

\section{Properties of the Generalized Brocard Triangles}

Various properties of the Brocard triangles carry over to the generalization. We present a few in this section.
\begin{proposition}
	\label{prop:inv_sim}
	We have a similarity $\triangle T_AT_BT_C \stackrel{-}{\sim} \triangle ABC$ of the (generalized) first Brocard triangle with the reference triangle.
\end{proposition}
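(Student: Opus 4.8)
The plan is to run a directed-angle argument on the generalized Brocard circle, exploiting two facts that are already available: the concyclicity of $O,T_A,T_B,T_C$ from Theorem~\ref{thm:gen_brocard}, and the perpendicularity relations $OT_A\perp BC$, $OT_B\perp CA$, $OT_C\perp AB$. The idea is that these perpendicularities transport the directions of the sides of $\triangle ABC$ into the directions of the central segments $OT_A,OT_B,OT_C$, and the inscribed-angle theorem then transfers those directions into the angles of $\triangle T_AT_BT_C$ itself.

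First I would record the perpendicularities. In the proof of Corollary~\ref{corr:equidistant} it is shown that $T_AA_1=T_AA_2$ and that $OT_A$ is the perpendicular bisector of the chord $A_1A_2$ of $\Gamma$. Since $A_1$ and $A_2$ both lie on line $BC$, that perpendicular bisector is perpendicular to $BC$; hence $OT_A\perp BC$. By the cyclic symmetry of the whole configuration the same reasoning gives $OT_B\perp CA$ and $OT_C\perp AB$.

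Next I would fix the correspondence $A\leftrightarrow T_A$, $B\leftrightarrow T_B$, $C\leftrightarrow T_C$ and compute the angle of $\triangle T_AT_BT_C$ at $T_C$. Because $O,T_A,T_B,T_C$ are concyclic by Theorem~\ref{thm:gen_brocard}, the inscribed-angle theorem for directed angles gives $\measuredangle T_AT_CT_B=\measuredangle T_AOT_B=\measuredangle(OT_A,OT_B)$. A rotation by $90^\circ$ preserves directed angles between lines, so $\measuredangle(OT_A,OT_B)=\measuredangle(BC,CA)=\measuredangle BCA$. Comparing with the angle of $\triangle ABC$ at $C$, namely $\measuredangle ACB=-\measuredangle BCA$, I obtain $\measuredangle T_AT_CT_B=-\measuredangle ACB$. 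Running the identical computation at $T_A$ and $T_B$ yields $\measuredangle T_BT_AT_C=-\measuredangle BAC$ and $\measuredangle T_CT_BT_A=-\measuredangle CBA$. All three corresponding directed angles being negated is exactly the characterization of an orientation-reversing similarity, so $\triangle T_AT_BT_C\stackrel{-}{\sim}\triangle ABC$.

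The one delicate point — and the only real obstacle — is the sign bookkeeping that distinguishes inverse from direct similarity: the perpendicularity relations are what convert the side directions of $\triangle ABC$ into the central directions $OT_\bullet$, and keeping the order of the rays consistent through the inscribed-angle step is precisely what produces the minus sign rather than a plus sign. As long as that orientation is tracked carefully (and the configuration is nondegenerate, so that each chord $A_1A_2$ etc.\ is genuine), the conclusion $\stackrel{-}{\sim}$ follows, which also matches the classical fact recovered in Section~\ref{subsec:revisit}.
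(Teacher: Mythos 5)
Your proposal is correct and follows essentially the same route as the paper's own proof: both derive $OT_A\perp BC$ (and cyclically) from $OT_A$ being the perpendicular bisector of the chord $A_1A_2$, then use the concyclicity of $O,T_A,T_B,T_C$ from Theorem~\ref{thm:gen_brocard} and the inscribed-angle theorem to get $\measuredangle T_AT_CT_B=\measuredangle T_AOT_B=\measuredangle BCA=-\measuredangle ACB$. Your writeup is merely more explicit about the $90^\circ$-rotation step and the sign bookkeeping, which the paper compresses into a single sentence.
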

\begin{proof}
	Observe that $OT_A\perp BC$ since $OT_A$ is the perpendicular bisector of $A_1A_2$. Similarly, $OT_B\perp AC$, indicating that $\measuredangle T_AOT_B = \measuredangle BCA$. By utilizing Theorem~\ref{thm:gen_brocard}, we find that $\measuredangle T_AOT_B = \measuredangle T_AT_CT_B$, resulting in $\measuredangle T_AT_CT_B = -\measuredangle ACB$. Analogously, we obtain similar relationships, leading to the similarity $\triangle T_AT_BT_C \stackrel{-}{\sim} \triangle ABC$.
\end{proof}
\begin{proposition}[Generalized Steiner Point]
	The parallels from $A,B,C$ to $T_BT_C$, $T_CT_A$, $T_AT_B$ are concurrent on $\odot (ABC)$. 
\end{proposition}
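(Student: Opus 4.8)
The plan is to locate the concurrency point directly and arrange that it lie on $\odot(ABC)$ by construction, rather than proving concurrency and circle-membership separately. I would define $S$ to be the second intersection of the line through $A$ parallel to $T_BT_C$ with $\odot(ABC)$, so that $S\in\odot(ABC)$ and $AS\parallel T_BT_C$ hold automatically. It then suffices to show that line $BS$ is parallel to $T_CT_A$ and line $CS$ is parallel to $T_AT_B$; this forces the parallels from $B$ and $C$ to pass through $S$ as well, establishing concurrency at the point $S$ of the circumcircle.

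The verification is a directed-angle chase modulo $\pi$, using the two inputs at hand. First, the concyclicity of $A,B,C,S$ gives $\measuredangle(SB,SA)=\measuredangle(CB,CA)$ and $\measuredangle(SC,SA)=\measuredangle(BC,BA)$. Second, Proposition~\ref{prop:inv_sim} furnishes the opposite similarity $\triangle T_AT_BT_C \stackrel{-}{\sim}\triangle ABC$, which reverses orientation and therefore negates corresponding vertex angles, so that $\measuredangle(T_CT_B,T_CT_A)=-\measuredangle(CB,CA)$ and $\measuredangle(T_BT_C,T_BT_A)=-\measuredangle(BC,BA)$. I would then expand along the chain
\[
\measuredangle(BS,T_CT_A)=\measuredangle(BS,AS)+\measuredangle(AS,T_BT_C)+\measuredangle(T_BT_C,T_CT_A),
\]
and substitute $\measuredangle(AS,T_BT_C)=0$ (the definition of $S$), $\measuredangle(BS,AS)=\measuredangle(CB,CA)$, and $\measuredangle(T_BT_C,T_CT_A)=-\measuredangle(CB,CA)$. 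The three terms telescope and cancel, giving $\measuredangle(BS,T_CT_A)=0$, i.e. $BS\parallel T_CT_A$. An entirely parallel computation based at vertex $T_B$ yields $\measuredangle(CS,T_AT_B)=0$, i.e. $CS\parallel T_AT_B$, and the proof concludes.

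The step requiring the most care is the sign bookkeeping forced by the orientation reversal: since the similarity in Proposition~\ref{prop:inv_sim} is indirect, each angle of $\triangle T_AT_BT_C$ enters with the sign opposite to the matching angle of $\triangle ABC$, and it is exactly this sign that makes the cancellation in the chain go through; getting it backwards would produce a nonzero angle. The only other point to keep in mind is the mild degenerate possibility that the parallel through $A$ is tangent to $\odot(ABC)$ (so that $S$ collapses to $A$), which can be treated as a limiting case; generically $S\neq A$ and the directed-angle identities above apply verbatim. No metric estimates are involved, so once the angle chase is organized correctly the statement follows immediately.
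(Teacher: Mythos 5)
Your proof is correct and is essentially the paper's own argument: both rest on the indirect similarity of Proposition~\ref{prop:inv_sim} combined with the inscribed-angle theorem in directed angles modulo $\pi$. The only difference is a standard phantom-point reshuffle --- you define $S$ as the second intersection of the $A$-parallel with $\odot(ABC)$ and deduce that the parallels from $B$ and $C$ pass through it, whereas the paper intersects the parallels from $A$ and $B$ at $S_t$ and then verifies $\measuredangle AS_tB = \measuredangle ACB$ to place $S_t$ on the circle --- a reordering of the same telescoping angle chase, with the incidental benefit of avoiding the degenerate case where the two parallels fail to meet.
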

\begin{proof}
	Consider the parallels drawn from points $A$ and $B$ to the lines $T_BT_C$ and $T_CT_A$, respectively, which intersect at a point $S_t$. We can observe that $\measuredangle AS_tB = \measuredangle T_BT_CT_A = \measuredangle ACB$, where the last equality follows from Proposition~\ref{prop:inv_sim}. Consequently, $S_t$ lies on the circumcircle of $\triangle ABC$.\\

Further, we have $\measuredangle AS_tC = \measuredangle ABC = \measuredangle T_CT_BT_A$. As $AS_t \parallel T_BT_C$, it follows that $CS_t \parallel T_AT_B$. This leads us to the conclusion that the parallels drawn from points $A$, $B$, and $C$ to the lines $T_BT_C$, $T_CT_A$, and $T_AT_B$, respectively, are concurrent at point $S_t$, which lies on $\odot (ABC)$.
\end{proof}
\begin{corollary}[Generalized Tarry Point]
	The perpendiculars from $A,B,C$ to $T_BT_C$, $T_CT_A$, $T_AT_B$ are concurrent on the antipode $T_a$ of $S_t$ in $\odot (ABC)$.
\end{corollary}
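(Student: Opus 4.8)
The plan is to leverage the Generalized Steiner Point result and the standard antipode-reflection trick on the circumcircle. I have just shown that the \emph{parallels} from $A,B,C$ to $T_BT_C$, $T_CT_A$, $T_AT_B$ concur at the point $S_t$ lying on $\odot(ABC)$. The \emph{perpendiculars} to these same lines are simply rotated by $90^\circ$, so the natural strategy is to relate the concurrency point of the perpendiculars to $S_t$ via the antipodal map on the circumcircle, which is exactly the reflection through the center $O$.

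**First I would** establish the concurrency itself. Let $T_a$ denote the antipode of $S_t$ in $\odot(ABC)$, so that $S_tT_a$ is a diameter and $\measuredangle S_tAT_a = \measuredangle S_tBT_a = \measuredangle S_tCT_a = 90^\circ$ (angles in a semicircle). This means the line $AT_a$ is perpendicular to $AS_t$; since $AS_t \parallel T_BT_C$ by the Generalized Steiner Point proof, we get $AT_a \perp T_BT_C$. Thus $AT_a$ \emph{is} the perpendicular from $A$ to $T_BT_C$. Running the identical argument at $B$ and $C$ shows $BT_a \perp T_CT_A$ and $CT_a \perp T_AT_B$, so all three perpendiculars pass through $T_a$. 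This simultaneously proves concurrency and identifies the concurrency point as the antipode, so the two claims of the corollary collapse into one observation.

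**The main obstacle** I anticipate is purely one of bookkeeping with directed angles modulo $\pi$: the relation $AS_t \parallel T_BT_C$ together with $AS_t \perp AT_a$ must be chained carefully, since $\measuredangle$ is only defined modulo $\pi$ and perpendicularity must be tracked as an added $90^\circ$. The cleanest way to avoid sign ambiguity is to argue synthetically that $T_a$ being the antipode forces $\angle S_tAT_a$ to be a right angle via Thales, rather than manipulating directed-angle sums; the parallelism statements from the previous proof can then be transported through the right angle at each vertex without re-deriving them. No genuine geometric difficulty remains once the antipode is recognized as the orthogonality partner of the Steiner direction at every vertex.
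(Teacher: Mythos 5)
Your proposal is correct and is precisely the argument the paper intends: the statement is labeled a corollary of the Generalized Steiner Point proposition with no written proof, because the Thales observation $\measuredangle S_tAT_a = 90^\circ$ combined with $AS_t \parallel T_BT_C$ (and its analogues at $B$ and $C$) immediately yields concurrency of the perpendiculars at the antipode $T_a$. Your handling of the directed-angle bookkeeping by appealing to the right angle in a semicircle rather than chasing signs is a sensible way to make the one-line derivation airtight.
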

\begin{proposition}
	\label{prop:inv_sim_2}
	We have a similarity of polygons $$\triangle ABC \cup S_t\cup T_a \stackrel{-}{\sim} \triangle T_AT_BT_C \cup R \cup O.$$
\end{proposition}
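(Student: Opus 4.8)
The plan is to show that the single indirect similarity furnished by Proposition~\ref{prop:inv_sim} already carries the two extra points into place. Write $\sigma$ for the orientation-reversing similarity with $\sigma(A)=T_A$, $\sigma(B)=T_B$, $\sigma(C)=T_C$ (the inverse of the map in Proposition~\ref{prop:inv_sim}). Since similarities send circumcircles to circumcircles, $\sigma$ maps $\odot(ABC)$ onto $\odot(T_AT_BT_C)$, which by Theorem~\ref{thm:gen_brocard} is precisely the generalized Brocard circle carrying $O$ and $R$. Thus $\sigma(S_t)$ and $\sigma(T_a)$ both land on this circle, and it remains to identify them as $R$ and $O$. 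I would reduce the second identification to the first: because $T_a$ is the antipode of $S_t$ on $\odot(ABC)$, $\sigma(T_a)$ is the antipode of $\sigma(S_t)$ on the Brocard circle; and since $OR$ is a diameter of that circle (established inside the proof of Theorem~\ref{thm:gen_brocard}), once we know $\sigma(S_t)=R$ it follows immediately that $\sigma(T_a)=O$. So the whole proposition collapses to the single claim $\sigma(S_t)=R$.

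To pin down $\sigma(S_t)$, set $X=\sigma(S_t)$ and exploit that an indirect similarity reverses directed angles between lines, $\measuredangle(\sigma\ell,\sigma m)=-\measuredangle(\ell,m)$. Taking $\ell=AS_t$ and $m=BC$ gives $\measuredangle(T_AX,T_BT_C)=-\measuredangle(AS_t,BC)$, since $\sigma(AS_t)=T_AX$ and $\sigma(BC)=T_BT_C$. The defining property of the Steiner point is $AS_t\parallel T_BT_C$, so $\measuredangle(AS_t,BC)=\measuredangle(T_BT_C,BC)$; substituting gives $\measuredangle(T_AX,T_BT_C)=\measuredangle(BC,T_BT_C)$, whence $\measuredangle(T_AX,BC)=0$, i.e. $T_AX\parallel BC$. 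Running the same computation at the other two vertices shows $T_BX\parallel CA$ and $T_CX\parallel AB$. Hence $X=\sigma(S_t)$ is the second intersection with the Brocard circle of the line through $T_A$ parallel to $BC$, a condition that determines it uniquely.

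It then suffices to check that $R$ meets this same description. Recall from the proof of Proposition~\ref{prop:inv_sim} that $OT_A$ is the perpendicular bisector of the chord $A_1A_2\subset BC$, so $OT_A\perp BC$. Since $OR$ is a diameter of the Brocard circle and $T_A$ lies on that circle, Thales' theorem gives $\measuredangle OT_AR=90^\circ$, whence $T_AR\perp OT_A$ and therefore $T_AR\parallel BC$. The same argument gives $T_BR\parallel CA$ and $T_CR\parallel AB$. As $R$ lies on the Brocard circle, it coincides with the uniquely determined point $X$, so $\sigma(S_t)=R$, and the reduction above yields $\sigma(T_a)=O$. This exhibits $\sigma$ as an indirect similarity carrying $\triangle ABC\cup S_t\cup T_a$ to $\triangle T_AT_BT_C\cup R\cup O$.

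The main obstacle I anticipate is the bookkeeping in the middle step: because $S_t$ is defined by a parallelism that mixes a vertex of $\triangle ABC$ with a side of the Brocard triangle, one must apply the orientation-reversal in $\sigma$ consistently, so that the two sign reversals cancel and leave $T_AX$ parallel to $BC$ rather than to some rotated line. A complex-coordinate check, writing $\sigma(z)=k\bar z+c$ and tracking a direction $d$ as $d\mapsto k\bar d$, is a reassuring way to confirm the cancellation; everything else is a direct appeal to results already in hand.
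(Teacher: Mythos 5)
Your proposal is correct and follows essentially the same route as the paper: both extend the indirect similarity of Proposition~\ref{prop:inv_sim}, identify the image of $S_t$ with $R$ via the two parallelisms $T_AR \parallel BC$ (from $OT_A \perp BC$ and $OR$ being a diameter of the generalized Brocard circle) and $AS_t \parallel T_BT_C$, and then transfer $T_a \mapsto O$ by antipodality. Your writeup is in fact slightly more careful than the paper's, since you make explicit the uniqueness argument pinning down $\sigma(S_t)$ on the circle, which the paper leaves implicit.
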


\begin{proof}
	In the proof of Theorem~\ref{thm:gen_brocard}, we established that $OR$ is the diameter of $\odot(T_AT_BT_C)$ and $OT_A\perp BC$, implying $RT_A\parallel BC$. Additionally, as $AS_t\parallel T_BT_C$, we find that $\measuredangle (\overline{BC},\overline{AS_t}) = - \measuredangle (\overline{T_BT_C},\overline{T_AR})$. Then, utilizing the similarity in Proposition~\ref{prop:inv_sim}, we arrive at the similarity $$\triangle ABC \cup S_t \stackrel{-}{\sim} \triangle T_AT_BT_C \cup R.$$\\ 
    Further, the pairs ${T_a,S_t}$ and ${O,R}$ are antipodal points in $\odot(ABC)$ and $\odot(T_AT_BT_C)$, respectively. Hence, one obtains the similarity $$\triangle ABC \cup S_t \cup T_a \stackrel{-}{\sim} \triangle T_AT_BT_C \cup R \cup O.$$
\end{proof}
\begin{figure}
	\includegraphics[scale=0.3]{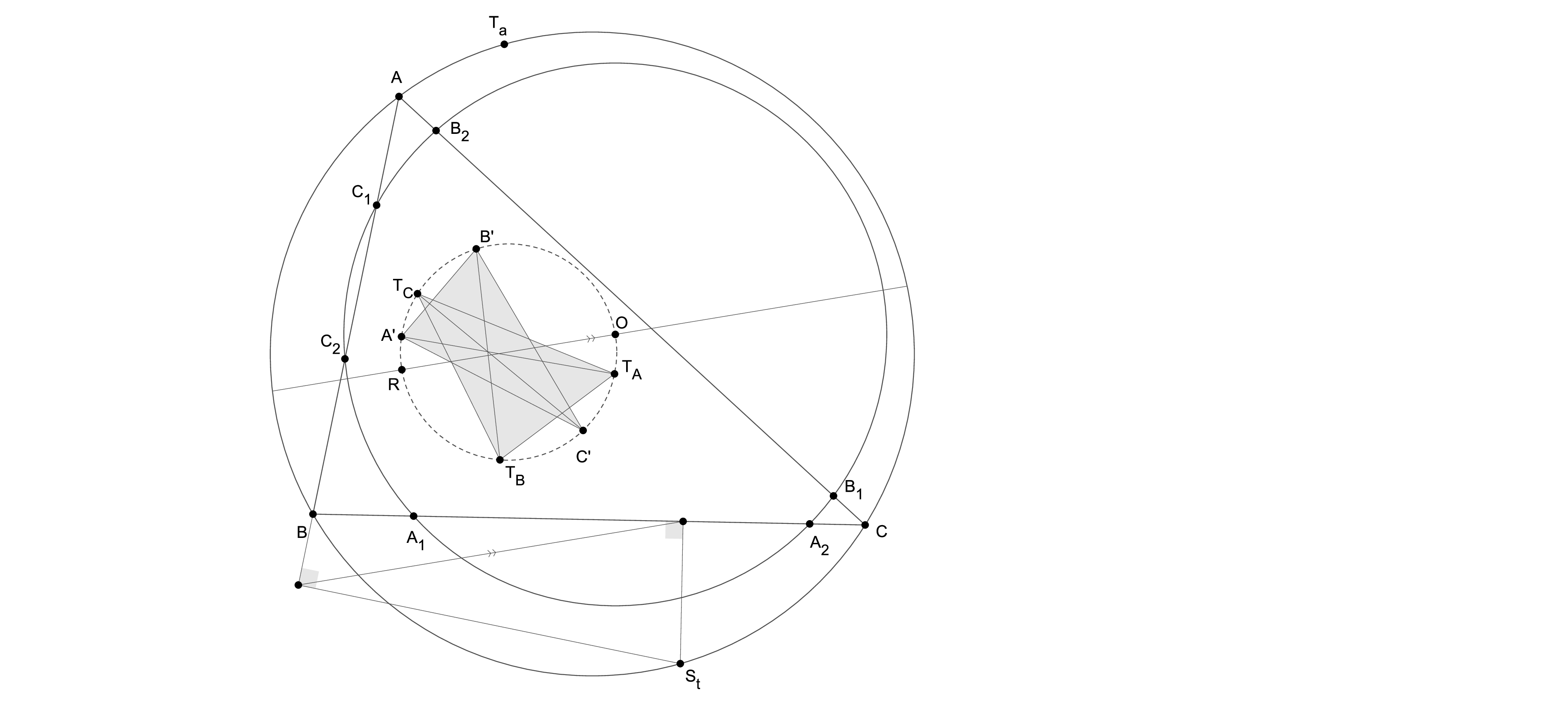}\\
	\caption{Generalized Brocard Triangles and their properties}
\end{figure}

\begin{proposition}
	The triangles $ T_AT_BT_C$ and $ A'B'C'$ are perspective. 
\end{proposition}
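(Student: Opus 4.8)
The plan is to prove the sharper claim that the three lines $T_AA'$, $T_BB'$, $T_CC'$ concur, working on the generalized Brocard circle, which by Theorem~\ref{thm:gen_brocard} passes through all six points $T_A,T_B,T_C,A',B',C'$ and has $OR$ as a diameter. First I would locate these points on that circle using the earlier proofs: $OT_A$ is the perpendicular bisector of $A_1A_2$, so $OT_A\perp BC$, and since $\angle OT_AR=90^\circ$ this yields $RT_A\parallel BC$ (and cyclically $RT_B\parallel CA$, $RT_C\parallel AB$); moreover, by Proposition~\ref{thm:rad_ax}, $A'$ lies on line $AR$, and likewise $B'\in BR$, $C'\in CR$.

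Next I would realize the Brocard circle as the unit circle in $\C$ with $R=1$, $O=-1$, writing $\delta(\ell)$ for the direction (mod $\pi$) of a line $\ell$. A point $X$ of the circle is determined by the direction of the chord $RX$; from $RT_A\parallel BC$ one gets $T_A=-e^{2i\,\delta(BC)}$, and since $A'$ is the second intersection of line $AR$ with the circle, $A'=-e^{2i\,\delta(AR)}$, with the analogous formulas at $B$ and $C$. Introducing $S_a=e^{2i\,\delta(BC)}$, $S_b=e^{2i\,\delta(CA)}$, $S_c=e^{2i\,\delta(AB)}$ and $V_a=e^{2i\,\delta(AR)}$, $V_b=e^{2i\,\delta(BR)}$, $V_c=e^{2i\,\delta(CR)}$ on the same unit circle, we find $(T_A,T_B,T_C,A',B',C')=-(S_a,S_b,S_c,V_a,V_b,V_c)$. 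So the six points are the images of the $S$'s and $V$'s under the central symmetry $z\mapsto-z$, an isometry preserving concurrency; hence $T_AA',T_BB',T_CC'$ concur if and only if the chords $S_aV_a,S_bV_b,S_cV_c$ do, that is, if and only if the direction triangles $S_aS_bS_c$ and $V_aV_bV_c$ are perspective.

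Finally I would settle this auxiliary concurrency. The chord of the unit circle joining $e^{2i\beta}$ and $e^{2i\rho}$ is the line $x\cos(\beta+\rho)+y\sin(\beta+\rho)=\cos(\beta-\rho)$, so writing $\beta_a=\delta(BC),\rho_a=\delta(AR)$, and so on, the three chords concur exactly when
\[
\det\begin{pmatrix}
\cos(\beta_a+\rho_a) & \sin(\beta_a+\rho_a) & \cos(\beta_a-\rho_a)\\
\cos(\beta_b+\rho_b) & \sin(\beta_b+\rho_b) & \cos(\beta_b-\rho_b)\\
\cos(\beta_c+\rho_c) & \sin(\beta_c+\rho_c) & \cos(\beta_c-\rho_c)
\end{pmatrix}=0.
\]
Expanding this determinant (e.g.\ via $2\cos\theta=e^{i\theta}+e^{-i\theta}$) should reduce its vanishing to one symmetric relation among the six directions, namely the trigonometric form of Ceva's theorem for $AR,BR,CR$; since these cevians concur at $R$, that relation holds and the determinant vanishes.

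The conceptual core is the central-symmetry reduction, which is immediate once the six points are placed on the Brocard circle. The hard part will be the last step — carrying out the determinant expansion and identifying it with trigonometric Ceva while keeping all directions consistent modulo $\pi$ so that the signs match; that directed-angle bookkeeping is where I expect the real obstacle to lie. As a sanity check, specializing to $A_1=B,B_1=C,C_1=A,A_2=C,B_2=A,C_2=B$ recovers the classical perspectivity of the first and second Brocard triangles.
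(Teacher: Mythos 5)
Your proof is correct, but it takes a genuinely different route from the paper. The paper argues synthetically: it takes the isogonal conjugate $R^*$ of $R$ in $\triangle ABC$, defines $S$ by $\triangle T_AT_BT_C \cup S \stackrel{-}{\sim} \triangle ABC \cup R^*$ (using Proposition~\ref{prop:inv_sim}), and shows by a short directed-angle chase (via $OB'\perp BR$, $OT_A\perp BC$) that each of $T_AA'$, $T_BB'$, $T_CC'$ passes through $S$ — so the paper not only proves perspectivity but \emph{identifies} the perspector as the similarity-image of $R^*$, information your argument does not produce. Your reduction, by contrast, is computational: the two geometric inputs you invoke ($RT_A\parallel BC$ and $A'\in AR$, with their cyclic analogues) are exactly the facts the paper extracts from Theorem~\ref{thm:gen_brocard} and Proposition~\ref{thm:rad_ax}, the parametrization $T_A=-e^{2i\delta(BC)}$, $A'=-e^{2i\delta(AR)}$ on the circle with $R=1$, $O=-1$ is right, and the central-symmetry step $z\mapsto -z$ is a clean observation. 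The step you flagged as the likely obstacle does go through, and more cleanly than you fear: expanding your determinant along the third column gives $\cos(\beta_a-\rho_a)\sin(u_c-u_b)-\cos(\beta_b-\rho_b)\sin(u_c-u_a)+\cos(\beta_c-\rho_c)\sin(u_b-u_a)$ with $u_i=\beta_i+\rho_i$, and applying product-to-sum to each term shows this equals exactly twice the difference
$$\sin(\rho_a-\beta_b)\sin(\rho_b-\beta_c)\sin(\rho_c-\beta_a)-\sin(\rho_a-\beta_c)\sin(\rho_b-\beta_a)\sin(\rho_c-\beta_b),$$
which is the directed trigonometric Ceva expression for lines of directions $\rho_a,\rho_b,\rho_c$ through $A,B,C$ (well-defined mod $\pi$, since shifting any $\beta_i$ or $\rho_i$ by $\pi$ flips both products together); it vanishes because $AR$, $BR$, $CR$ concur at $R$. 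Two small caveats: vanishing of the determinant also covers the case where the three chords are parallel, but that still yields perspectivity (perspector at infinity), so no harm; and you should note the generic nondegeneracy assumptions $T_A\neq A'$ etc., which the paper's proof needs as well. On balance, the paper's proof is shorter and yields the perspector; yours is self-contained modulo trig Ceva and requires no auxiliary similar point, at the cost of a half-page trigonometric computation.
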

\begin{proof}
	Let $R^*$ denote the isogonal conjugate of $R$ in $\triangle ABC$. Introduce a point $S$ such that $\triangle T_AT_BT_C \cup S \stackrel{-}{\sim} \triangle ABC \cup R^*$. From the proof of Theorem~\ref{thm:gen_brocard}, we know that $OB'\perp BR$ and $OT_A\perp BC$. Consequently, we obtain $\measuredangle T_AOB' = -\measuredangle RBC$. As $\measuredangle T_AT_BB' = \measuredangle T_AOB'$ and $\measuredangle R^*BA = -\measuredangle RBC$, it follows that $\measuredangle R^*BA = \measuredangle T_AT_BB'$. \\
    
    By definition of point $S$, we have $-\measuredangle ABR^* = \measuredangle T_AT_BS$. Thus, we find that $\measuredangle T_AT_BS=\measuredangle T_AT_BB'$, implying that $T_BB'$ passes through $S$. Similarly, we can establish that $T_AA'$ and $T_CC'$ also pass through $S$. Consequently, we can assert that $\triangle T_AT_BT_C$ and $\triangle A'B'C'$ are perspective.   
\end{proof}

\begin{proposition}
 \label{prop:simson2}
	The Simson line of $S_t$ is parallel line $OR$.
\end{proposition}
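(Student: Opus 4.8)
The plan is to compute the direction of the Simson line $\ell_{S_t}$ explicitly (as a directed angle modulo $\pi$ measured against the fixed line $BC$) and to show it coincides with the direction of $OR$. The first step is to record an \emph{absolute} direction formula for a Simson line. If $X\in\odot(ABC)$ and $U,V$ denote the feet of the perpendiculars from $X$ to $BC$ and $CA$, then $X,U,V,C$ are concyclic, since $\measuredangle XUC=\measuredangle XVC=90^\circ$. Because $U\in BC$, the line $UC$ is the line $BC$, so the directed inscribed-angle theorem gives $\measuredangle(\ell_{X},BC)=\measuredangle(UV,UC)=\measuredangle(XV,XC)$. As $XV\perp CA$, this equals $\measuredangle(XV,CA)+\measuredangle(CA,XC)=90^\circ+\measuredangle ACX$. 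Thus $\measuredangle(\ell_{X},BC)=90^\circ+\measuredangle ACX$; as a sanity check, $X=A$ recovers the fact that the Simson line of a vertex is the corresponding altitude, $\ell_A\perp BC$. Note that Lemma~\ref{lemma:Simson} by itself fixes only the relative directions of Simson lines, so isolating this absolute formula (and pinning down its sign) is the crux.

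Specializing to $X=S_t$ gives $\measuredangle(\ell_{S_t},BC)=90^\circ+\measuredangle ACS_t$, and the second step is to convert $\measuredangle ACS_t$ into data on the generalized Brocard circle. From the construction of the Steiner point we have $CS_t\parallel T_AT_B$, so $\measuredangle ACS_t=\measuredangle(CA,CS_t)=\measuredangle(CA,T_AT_B)$. Recall from the proof of Proposition~\ref{prop:inv_sim_2} that $OR$ is a diameter of $\odot(T_AT_BT_C)$ with $OT_A\perp BC$, and, cyclically, $RT_B\parallel CA$. Substituting $CA\parallel RT_B$ yields $\measuredangle ACS_t=\measuredangle(RT_B,T_AT_B)=\measuredangle RT_BT_A$. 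Finally, since $T_A,T_B,O,R$ all lie on the generalized Brocard circle (Theorem~\ref{thm:gen_brocard}), the inscribed-angle theorem on the chord $RT_A$ gives $\measuredangle RT_BT_A=\measuredangle ROT_A=\measuredangle(OR,OT_A)$, and inserting $\measuredangle(BC,OT_A)=90^\circ$ (from $OT_A\perp BC$) produces $\measuredangle ACS_t=\measuredangle(OR,BC)+90^\circ$.

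Combining the two steps gives $\measuredangle(\ell_{S_t},BC)=90^\circ+\measuredangle(OR,BC)+90^\circ\equiv\measuredangle(OR,BC)\pmod{\pi}$, so $\measuredangle(\ell_{S_t},OR)=0$ and $\ell_{S_t}\parallel OR$, as claimed. The main obstacle is the bookkeeping of signs: the same computation with the opposite sign in the Simson-direction formula would instead produce $\measuredangle(\ell_{S_t},OR)=-2\,\measuredangle(OR,BC)$, which is not generally zero, so the conclusion genuinely hinges on getting the first step right; everything downstream is routine directed-angle chasing using the parallelisms $CS_t\parallel T_AT_B$, $RT_B\parallel CA$, $OT_A\perp BC$ and concyclicity on the Brocard circle.
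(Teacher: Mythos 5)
Your proof is correct, but it follows a genuinely different route from the paper's. The paper leans on its prior machinery: it invokes Lemma~\ref{lemma:Simson} (whose proof is omitted there) to compare $\ell_{S_t}$ with the $A$-altitude $\ell_A$ viewed as the Simson line of $A$, obtaining $\measuredangle(\ell_{S_t},\overline{BC})=\measuredangle(\ell_{S_t},\ell_A)-90^\circ=\measuredangle AS_tT_a$, and then transfers this angle to $\measuredangle ORT_A$ via the full similarity $\triangle ABC\cup S_t\cup T_a \stackrel{-}{\sim}\triangle T_AT_BT_C\cup R\cup O$ of Proposition~\ref{prop:inv_sim_2}, finishing with $RT_A\parallel BC$. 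You instead prove from scratch an \emph{absolute} direction formula $\measuredangle(\ell_X,BC)=90^\circ+\measuredangle ACX$ via the cyclic quadrilateral $XUVC$ (I checked the sign; it is right, and your $X=A$ sanity check is apt), and then convert $\measuredangle ACS_t$ using only the parallelism $CS_t\parallel T_AT_B$ built into the Steiner-point construction, the cyclic variant $RT_B\parallel CA$ of the paper's $RT_A\parallel BC$ (which does follow identically from $OT_B\perp CA$ and $OR$ being a diameter, though the paper never states it explicitly), and the inscribed-angle theorem on the generalized Brocard circle of Theorem~\ref{thm:gen_brocard}. What each approach buys: the paper's proof is shorter because it amortizes the work into Lemma~\ref{lemma:Simson} and Proposition~\ref{prop:inv_sim_2}, whereas yours bypasses both the Tarry point $T_a$ and the unproved Simson-angle lemma entirely, making the argument self-contained at the cost of re-deriving a standard fact; your explicit attention to the sign of the direction formula is also well placed, since, as you note, the opposite sign would leave a residual $-2\measuredangle(OR,BC)$ and the proof would collapse. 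One small presentational caveat: you cite the proof of Proposition~\ref{prop:inv_sim_2} for $RT_B\parallel CA$, but strictly you should cite the proofs of Proposition~\ref{prop:inv_sim} (for $OT_B\perp CA$) and Theorem~\ref{thm:gen_brocard} (for $OR$ a diameter) and note the cyclic analogue yourself, as Proposition~\ref{prop:inv_sim_2} records only the $T_A$ case.
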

\begin{proof}
	Consider the Simson line $\ell_{S_t}$ of point $S_t$ with respect to $\triangle ABC$. Let $\ell_A$ represent the $A$-altitude of $\triangle ABC$, which is also the Simson line of point $A$ in $\triangle ABC$. Utilizing Lemma~\ref{lemma:Simson}, we find that $$\measuredangle(\ell_{S_t},\overline{BC}) = \measuredangle (\ell_{S_t},\ell_A) - 90^\circ = \measuredangle AS_tT_a = \measuredangle ORT_A,$$ where the last equality follows from Proposition~\ref{prop:inv_sim_2}. As proven in Proposition~\ref{prop:inv_sim_2}, we have $RT_A\parallel BC$. Consequently, we deduce that $OR \parallel \ell_{S_t}$, showcasing the parallel relationship between the line $OR$ and the Simson line $\ell_{S_t}$.
\end{proof}
\begin{corollary}
	The Simson line of $T_a$ is perpendicular to $OR$.
\end{corollary}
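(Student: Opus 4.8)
The plan is to deduce this directly from Proposition~\ref{prop:simson2} together with the Simson-line angle formula of Lemma~\ref{lemma:Simson}. Since we already know that the Simson line $\ell_{S_t}$ of $S_t$ satisfies $\ell_{S_t}\parallel OR$, it suffices to show that the Simson line $\ell_{T_a}$ of $T_a$ is perpendicular to $\ell_{S_t}$; the conclusion $\ell_{T_a}\perp OR$ then follows immediately.

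To compare the two Simson lines, I would apply Lemma~\ref{lemma:Simson} with $M=S_t$ and $N=T_a$, both of which lie on $\odot(ABC)$. The lemma yields
$$\measuredangle(\ell_{S_t},\ell_{T_a}) = \measuredangle S_tAT_a.$$
The key observation is that $T_a$ is, by the definition in the Generalized Tarry Point corollary, the antipode of $S_t$ in $\odot(ABC)$. Hence $S_tT_a$ is a diameter of the circumcircle, and the inscribed angle it subtends at the vertex $A$ equals $90^\circ$, so $\measuredangle S_tAT_a = 90^\circ$ as directed angles modulo $\pi$.

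Combining these, $\measuredangle(\ell_{S_t},\ell_{T_a}) = 90^\circ$, that is, $\ell_{T_a}\perp\ell_{S_t}$. Since $\ell_{S_t}\parallel OR$ by Proposition~\ref{prop:simson2}, I conclude that $\ell_{T_a}\perp OR$, as claimed. There is no real obstacle here: the statement is a clean corollary, and the only point requiring a moment's care is the directed-angle bookkeeping, which is automatic because the antipodal relation forces $\measuredangle S_tAT_a = 90^\circ$ unconditionally modulo $\pi$.
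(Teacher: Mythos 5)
Your proof is correct and is exactly the argument the paper intends (the corollary is stated without proof there): apply Lemma~\ref{lemma:Simson} to the antipodal pair $S_t, T_a$ to get $\measuredangle(\ell_{S_t},\ell_{T_a}) = \measuredangle S_tAT_a = 90^\circ$ by Thales, then combine with $\ell_{S_t}\parallel OR$ from Proposition~\ref{prop:simson2}. The directed-angle bookkeeping is handled properly, so nothing is missing.
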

\begin{proposition}
    \label{prop:simson}
    Let $OR$ intersect $BC, CA$, and $AB$ at $X,Y,Z$. Let $O_A,O_B,$ and $O_C$ be the circumcenters of $AYZ, BZX,$ and $CXY$. Triangles $ABC$ and $O_AO_BO_C$ are perspective at $S_t$.
\end{proposition}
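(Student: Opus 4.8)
The plan is to show that each of the three lines $AO_A$, $BO_B$, $CO_C$ passes through $S_t$; by the cyclic symmetry of the configuration it will suffice to treat $AO_A$ carefully and then invoke that symmetry. Throughout I would work with directed angles modulo $\pi$ and write $\ell=OR$. The guiding observation is Proposition~\ref{prop:simson2}: the Simson line $\ell_{S_t}$ of $S_t$ in $\triangle ABC$ is parallel to $\ell$. So the strategy is to compute the direction of $AO_A$ and the direction of $AS_t$ independently and verify that they agree.

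First I would pin down the direction of $AO_A$. Since $Y\in CA$ and $Z\in AB$, the chord $AZ$ of $\odot(AYZ)$ lies along $AB$, while the inscribed angle at $Y$ is read off along $CA$ and $\ell$. The directed tangent--chord relation then gives $\measuredangle(\tau_A,AB)=\measuredangle(CA,\ell)$ for the tangent $\tau_A$ to $\odot(AYZ)$ at $A$, and because $AO_A\perp\tau_A$ this yields $\measuredangle(AO_A,AB)=\measuredangle(CA,\ell)+90^\circ$.

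Next I would compute the direction of $AS_t$. I would first record the standard Simson-line identity that for any $M\in\odot(ABC)$ the chord $MA$ satisfies $\measuredangle(MA,AB)=90^\circ-\measuredangle(\ell_M,CA)$, where $\ell_M$ is the Simson line of $M$. This follows by noting that the feet of the perpendiculars from $M$ to $BC$ and $CA$ lie with $M,C$ on the circle of diameter $MC$, angle-chasing there to get $\measuredangle(\ell_M,CA)=90^\circ-\measuredangle(MC,BC)$, and using $\measuredangle(MC,BC)=\measuredangle(MA,AB)$ (both subtend arc $MB$). Applying this with $M=S_t$ and $\ell_{S_t}\parallel\ell$ gives $\measuredangle(S_tA,AB)=90^\circ+\measuredangle(CA,\ell)$, which is precisely the direction already found for $AO_A$.

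Since $AO_A$ and $AS_t$ share the point $A$ and have equal direction, they coincide, so $S_t\in AO_A$; repeating the argument verbatim under the relabelling $A\to B\to C$ (with $X,Y,Z$ permuted accordingly) places $S_t$ on $BO_B$ and $CO_C$ as well, so the three lines concur at $S_t$ and the triangles are perspective there. I expect the main obstacle to be resisting the tempting but incorrect guess that the common point of $\odot(AYZ),\odot(BZX),\odot(CXY)$ --- the Miquel point of the transversal $\ell$ --- is the perspector: that Miquel point does lie on $\odot(ABC)$, but a short computation shows its Simson line is \emph{not} parallel to $\ell$, so it is generally distinct from $S_t$. The genuine content is therefore the clean matching of the two directions above, for which Proposition~\ref{prop:simson2} is the essential input.
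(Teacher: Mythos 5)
Your proof is correct and follows essentially the same route as the paper: both arguments hinge on Proposition~\ref{prop:simson2} ($\ell_{S_t}\parallel OR$) and a directed-angle chase showing that $A$, $O_A$, $S_t$ are collinear, concluding by symmetry. Your tangent--chord computation of the direction of $AO_A$ and your Simson-direction identity are just a more explicit unpacking of the paper's chain $\measuredangle YZA = \measuredangle(\ell_{S_t},AB) = 90^\circ - \measuredangle S_tBC = 90^\circ - \measuredangle S_tAC$ together with its circumcenter criterion.
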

\begin{proof}
    By using Proposition~\ref{prop:simson2}, one derives an angle equality:
   \begin{align*}
        \measuredangle YZA &= \measuredangle (\ell_{S_t}, AB) \\
        &= 90^\circ - \measuredangle S_tBC \\
        &= 90^\circ - \measuredangle S_tAC.
    \end{align*}
    We can now conclude that $AS_t$ passes through the circumcenter $O_A$ of $\triangle AYZ$, after which it is clear that $ABC$ and $O_AO_BO_C$ are perspective at point $S_t$.
\end{proof}

\bibliographystyle{plain}
\bibliography{references}

\bigskip

\bigskip

\bigskip

DEPARTMENT OF MATHEMATICS

INDIAN INSTITUTE OF SCIENCE

BANGALORE, INDIA

\textit{E-mail address}: \texttt{sudharshankv02@gmail.com}







\newpage

\end{document}